\newcommand{\RN}[1]{\textup{\uppercase\expandafter{\romannumeral#1}}}
\def\e{{\mathop {\rm e}\nolimits}}
\newcommand{\ep}{\varepsilon}
\newcommand{\R}{\mathbb{R}}
\newcommand{\N}{\mathbb{N}}
\def\ubar{\bar{u}}
\def\bb{\bar{\beta}}
\def\wt{\widetilde}
\def\tu{{\wt u}}
\def\ts{{\wt s}}
\def\tx{{\wt x}}
\def\tr{{\wt r}}
\def\tp{{\wt p}}
\def\tbeta{{\wt \beta}}
\def\tpsi{{\wt \psi}}
\newtheorem{theorem}{Theorem}[section]
\newtheorem{proposition}[theorem]{Proposition}
\theoremstyle{definition}
\newtheorem{remark}[theorem]{Remark}
\renewcommand{\theta}{\vartheta}
\numberwithin{equation}{section}
\title[Optimal control of the transmission rate in compartmental epidemics]
{Optimal control of the transmission rate\\ in compartmental epidemics}
\author[L. Freddi]{Lorenzo Freddi}
\address[L. Freddi]{Dipartimento di Scienze Matematiche, Informatiche e Fisiche,
via delle Scienze 206, 33100 Udine, Italy
}\email{lorenzo.freddi@uniud.it}
\date{}
\subjclass[2010]{49J45, 37N25, 	92D30}
\keywords{Optimal Control, Calculus of Variations, Compartmental Epidemics}
\begin{document}

\begin{abstract}
We introduce a general system of ordinary differential equations that includes some classical and recent  models for the epidemic spread in a closed population without vital dynamic in a finite time horizon.  
The model is vectorial, in the sense that it accounts for a vector valued state function whose components represent various kinds of exposed/infected subpopulations, with a corresponding vector of control functions possibly different for any subpopulation. 
In the general setting, we prove well-posedness and positivity of the initial value problem for the system of state equations and
the existence of solutions to the optimal control problem of the coefficients of the nonlinear part of the system, under a very general cost functional. We also prove the uniqueness 
of the optimal solution for a small time horizon when the
cost is superlinear in all control variables with possibly different exponents in the interval $(1,2]$. 
We consider then a linear cost in the control variables and study the singular arcs. Full details are given in the case $n=1$
and the results are illustrated by the aid of some numerical simulations.
\end{abstract}

\maketitle

\
\section{Introduction}

Since the introduction of the first compartmental epidemic mo\-del by Kermack and McKendrick \cite{KMK1927}
and the subsequent extensions and generalizations (\cite{AM1991,DE2000,Heth2000,HR2004}),
optimal control problems for such models have been studied in order to reduce the economics, social and treatment costs 
of the epidemic spread (\cite{FLMcN1998,Behncke2000,Gumel04,YZ2008,GS2009,YZ2009,LS2011,HD2011,MK2012,GH2018}). Most of these works aimed to control the coefficients of the linear part 
of the differential equations to model isolation, quarantine and vaccination effects.
Control problems of the transmission coefficients, that is of the nonlinear part of the differential equations, 
have been considered mainly after the SARS-CoV epidemic of 2003 (\cite{LML2010,KK2014,SM2017,BJ2018}) and a recent renewed interest is due 
to the SARS-CoV-2 pandemic of 2019-2020 (\cite{GBetal2020,KS2020,LEE2013310,TLSB2020}).
The transmission rate can be, indeed, reduced by means of social distance policies.

In this paper we introduce a general setting that includes many of the mentioned models and possibly other different kind of epidemics in a closed population without vital dynamic in a finite time horizon $I:=[0,t_f]$. It is given by a set of ordinary differential equations
of the form
$$
\begin{cases}
\dot s(t)=-s(t)\,\beta(t)\cdot x(t)+\rho r(t)\\
\dot x(t)=s(t)\,\beta(t)\cdot x(t)\e_1+M x(t)\\
\dot r(t)=\sigma\cdot x(t)-\rho r(t)\\
\dot d(t)=\mu\cdot x(t)
\end{cases}
$$
where $M=(m_{ij})$ is a quasimonotone (or Metzler) lower triangular matrix, that is a lower triangular square matrix whose elements out of the diagonal are nonnegative.
 
As usual, $\cdot$ denotes the scalar product, $\e_1=(1,0,\dots0)$ is the first vector of the canonical basis of $\R^n$ and $M x$ denotes the usual row-by-column multiplication of the matrix $M$ with the column vector $x$. To model the evolution of an epidemic
\begin{itemize}
\item $s$ is the scalar density of the susceptible population, $x$ is the $n$-vector of the densities of various kind of infected populations (exposed, asimptomatic, infected, etc.) and $r$ and $p$ are the scalars of recovered and deceased individuals, respectively;
\item $\beta\in L^\infty(I;[0,1]^n)$, $\sigma,\mu\in[0,1]^n$, $\rho\in[0,1]$, $M\in [0,1]^{n\times n}$, 
are prescribed coefficients with various epidemiological meanings. Namely, $\beta$ is the vector-function of transmission coefficients, 
$\sigma$ and $\mu$ are constant vectors representing the fraction of recovered and dead individuals for any subpopulation, respectively, $\rho$ represents the fraction of recovered population that become susceptible again and $M$ represents 
the fraction of individuals that pass from a subpopulation to another after a certain time (for instance the exposed that becomes sintomatic). 
\end{itemize}
A specific feature of the model is that it is {\em vectorial}, in the sense that it accounts for a vector valued state function $x$ 
whose components represent various kinds of exposed/infected subpopulations, with a corresponding vector of control functions   possibly different for any subpopulation. 
Our general setting includes several classical models, like  
\begin{itemize}
\item SIR, SIRS, SIRD in the case $n=1$,
\item SEIR, SEIRS in the case $n=2$.
\end{itemize}
Besides these classical ones, many other models fall in the general setting; among the most recent we have for instance:
\begin{itemize}
\item a model for COVID-19 epidemic given in \cite{GBetal2020}, $s=S$, $x=(I,D,A,R,T)$ (that is there are $n=5$ subpopulations of exposed/infected individuals), $r=H$, $p=E$, $\beta_1=\alpha$, $\beta_2=\beta$, $\beta_3=\gamma$, $\beta_4=\delta$, $\beta_5=0$, $\rho=0$, $\sigma_1=\lambda$, $\sigma_2=\rho$, $\sigma_3=\kappa$, $\sigma_4=\xi$, $\sigma_5=\sigma$, $\mu_1=\mu_2=\mu_3=\mu_4=0$, $\mu_5=\tau$ and 
$$
M=\left(\begin{matrix}
-(\ep+\zeta+\lambda)&0&0&0&0\\
\ep&-(\eta+\rho)&0&0&0\\
\zeta&0&-(\theta+\mu+\kappa)&0&0\\
0&\eta&\theta&-(\nu+\xi)&0\\
0&0&\mu&\nu&-(\sigma+\tau)
\end{matrix}\right)
$$ 
\item a model for the optimal control of COVID-19 outbreak given in \cite{TLSB2020}, where $x=(e,a,i)$ (that is there are $n=3$ subpopulations of exposed/infected individuals), $\beta_1=0$, $\beta_2=\alpha_a/N$, $\beta_3=\alpha_i/N$, $\rho=\gamma$, $\sigma_1=0$, $\sigma_2=\rho$, $\sigma_3=\beta$, $\mu_1=\mu_2=0$, $\mu_3=\mu$ and
$$
M=\left(\begin{matrix}
-t^{-1}_{latent}&0&0\\
t^{-1}_{latent}&-(\kappa+\rho)&0\\
0&\kappa&-(\beta+\mu)
\end{matrix}\right)
$$
\item a model for the optimal control of influenza given in \cite{LEE2013310} where, in the basic formulation, $x=(e,i,a)$ (that is there are $n=3$ subpopulations of exposed/infected individuals), $\beta_1=\ep$, $\beta_2=1-q$, $\beta_3=\delta$, $\rho=0$, $\sigma_1=0$, $\sigma_2=f\alpha$, $\sigma_3=\eta$, $\mu_1=0$, $\mu_2=f$, $\mu_3=0$, and
$$
M=\left(\begin{matrix}
-\kappa&0&0\\
p\kappa&-\alpha&0\\
(1-p)\kappa&0&-\eta
\end{matrix}\right).
$$
\end{itemize}
In our analysis we assume that the time $t$ belongs to a finite time horizon $I:=[0,t_f]$ where the final time $t_f>0$ is given.
In the general setting, we prove the well-posedness of the initial value problem for the system of state equations.
The existence of solutions to the optimal control problem under a very general cost functional is a standard matter.
On the contrary, the problem of uniqueness of the optimal solution has received much less attention.
In 1998 Fister \cite{FLMcN1998} proved the uniqueness of the solution for a control problem of the chemotherapy in {HIV}
for a sufficiently small time horizon and a cost funtional that is quadratic in the control variable. Our general problem does not fall into the same setting, so that Fister's result cannot be 
directly applicated. Nevertheless, the idea can be fruitfully used also in our framework 
leading to the same kind of uniqueness result which, on the other hand, can be extended to the case in which the
cost is superlinear in all control variables with possibly different exponents in the interval $(1,2]$; this allows
to capture a nonlinear growth of costs due to overcrowding in healthcare facilities and to gradually higher level of slowdown of the economy, with different degrees of nonlinearity associated to different distance and slowdown policies that are simultaneously actuated.
It is important to remark that this uniqueness result for a small time horizon cannot be iterated in order to obtain 
a uniqueness result for every $t_f$ (see Remark \ref{unique}): this problem is still open.

In the last section of the paper we consider a linear cost in the control variables and study the singular arcs. Full details are given in the case $n=1$
together with a few numerical simulations made by using the package Bocop \cite{Bocop, BocopExamples}.

\section{Well-posedness of the initial value problem}

Let us remark that, under differentiability of the population densities, 
the total population is preserved if and only if
\begin{eqnarray*}
0&=&\dot s+\sum_{i=1}^n\dot x_i+\dot r+ \dot d\\
&=&\sum_{h=1}^n\big(\sum_{i=1}^nm_{ih}+\sigma_h+\mu_h\big)x_h.
\end{eqnarray*}
For this reason we assume that the coefficients of the system satisfy the {\em closed population assumption}  
\begin{equation}\label{mc}
\sum_{i=1}^nm_{ih}+\sigma_h+\mu_h=0\quad\mbox{ for } h=1,...,n.
\end{equation}
With this hypothesys and under initial conditions satisfying the requirement
$$
s(0)+\sum_{i=1}^nx_{i}(0)+r(0)+d(0)=1
$$
then we have 
\begin{equation*}\label{cmass}
s(t)+\sum_{i=1}^nx_i(t)+r(t)+d(t)=1\quad \forall\,t\in I.
\end{equation*}
The closed population assumption is a condition on the coefficients of the system (hence independent of the evolution of any subpopulation) that is sufficient to ensure that the total population is preserved. Physically, it represents a mass conservation property. It is satisfied by the epidemic models \cite{GBetal2020}, \cite{LEE2013310} and \cite{TLSB2020} mentioned in the introduction.

Under the closed population assumption, by the previous equation, the evolution of $d(t)$ can be directly 
deduced by those of the other subpopulations. Then, the fourth equation can be eliminated from the system and we deal with the following reduced 
initial value problem:
\begin{equation}\label{sxr}
\begin{cases}
\dot s(t)=-s(t)\,\beta(t)\cdot x(t)+\rho r(t)\\
\dot x(t)=s(t)\,\beta(t)\cdot x(t)\e_1+M x(t)\\
\dot r(t)=\sigma\cdot x(t)-\rho r(t)\\
s(0)=s_0,\ x(0)=x_0,\ r(0)=r_0.
\end{cases}
\end{equation}
Since $x$ is a vector then, of course, $x_0=(x_{01},\dots,x_{0n})$. To be consistent with the epidemiological character of 
the model, we make the following {\em initial condition assumption}
\begin{eqnarray}
&&s_0,r_0\in[0,1],\  x_0\in[0,1]^n,\nonumber\\
&&s_0+\sum_{i=1}^nx_{0i}+r_0\le1,\label{wpa}\\ 
&&x_{01}>0.\nonumber 
\end{eqnarray}

\begin{theorem}\label{wpth}
Let us assume that $\beta\in L^\infty(I;[0,1])$, $\rho\in[0,1]$, $M\in [0,1]^{n\times n}$ be a
lower triangular quasimonotone matrix and $\sigma,\mu\in[0,1]^n$  satisfy the closed population assumption \eqref{mc} and the initial condition assumption \eqref{wpa}.
 Then the system
\eqref{sxr} admits a unique solution $(s,x,r)$ such that
\begin{enumerate}
\item the solution is Lipschitz continuous on the interval $I$ and taking values 
$x(t)\in[0,1]^n$ and $s(t),r(t)\in[0,1]$ for every $t\in I$,
\item if $s_0>0$ then $s(t)>0$ for every $t\in I$,
\item if $r_0>0$ then $r(t)>0$ for every $t\in I$,
\item if $x_{0i}>0$ then $x_i(t)>0$ for every $t\in I$, $i=1,\dots,n$.
\end{enumerate}
\end{theorem}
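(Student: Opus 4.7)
The plan is to first obtain a unique local Carath\'eodory solution, then show that it stays in the compact set
$$
K := \Bigl\{(s,x,r)\in\R\times\R^n\times\R : s,r\ge 0,\ x_i\ge 0\ \forall i,\ s+\sum_{i=1}^n x_i+r\le 1\Bigr\},
$$
which will yield global existence on $I$ together with the bounds in (1), and finally to establish the strict positivity statements (2)--(4) by scalar Gronwall estimates exploiting the lower-triangular Metzler structure of $M$.

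Since $\beta\in L^\infty(I;[0,1]^n)$ and the right-hand side of \eqref{sxr}, viewed as a function of the state, is a polynomial of degree two with $L^\infty$ time-dependent coefficients, a Carath\'eodory version of Picard--Lindel\"of provides a unique absolutely continuous local solution. On $K$ the right-hand side is bounded by a constant depending only on $\|\beta\|_\infty$, $|\sigma|$, $|M|$, $\rho$, so once the invariance of $K$ is established the solution is Lipschitz on all of $I$, proving (1).

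For nonnegativity, I would use a perturbation argument: for $\eps>0$, add $\eps$ to each of the equations for $s$, $x_1,\dots,x_n$, and $r$. The perturbed vector field is then strictly inward-pointing on the nonnegative orthant: on $\{s_\eps=0\}$ one has $\dot s_\eps=\rho r_\eps+\eps>0$; on $\{r_\eps=0\}$, $\dot r_\eps=\sigma\cdot x_\eps+\eps>0$; and on $\{x_{\eps,i}=0\}$,
$$
\dot x_{\eps,i}=s_\eps(\beta\cdot x_\eps)\delta_{i,1}+\sum_{j<i}m_{ij}x_{\eps,j}+\eps>0,
$$
where the lower-triangular Metzler structure of $M$ ensures $m_{ij}\ge 0$ for $j<i$. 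Hence each perturbed solution remains strictly componentwise positive on $I$, and sending $\eps\to 0$ by continuous dependence yields nonnegativity of the original solution. The upper bound then follows directly from the closed population assumption \eqref{mc}: while the solution is nonnegative,
$$
\frac{d}{dt}\Bigl(s+\sum_{i=1}^n x_i+r\Bigr)=-\mu\cdot x\le 0,
$$
so that the sum stays below its initial value, which is $\le 1$ by \eqref{wpa}.

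Finally, the strict positivity claims (2)--(4) reduce to scalar Gronwall estimates on each equation. From $\dot s\ge -s(\beta\cdot x)$ one obtains $s(t)\ge s_0\exp\bigl(-\int_0^t\beta(\tau)\cdot x(\tau)\,d\tau\bigr)>0$ whenever $s_0>0$; from $\dot r\ge -\rho r$, $r(t)\ge r_0 e^{-\rho t}>0$ whenever $r_0>0$; and for $x_i$, the triangular Metzler structure gives $\dot x_i\ge m_{ii}x_i$ (using that the lower-index components and $s$ are already known nonnegative), whence $x_i(t)\ge x_{0i}e^{m_{ii}t}>0$ whenever $x_{0i}>0$. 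The main obstacle is the rigorous verification of positive invariance: a pure tangency argument at the boundary is delicate because the inward derivative is only $\ge 0$ there, so the perturbation trick (or equivalently an appeal to Nagumo's theorem) is what makes the argument work. The ordering dictated by the lower-triangular Metzler structure of $M$ is essential to run the componentwise invariance argument in the correct order.
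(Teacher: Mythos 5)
Your proof is correct, and its skeleton (local Carath\'eodory solution, invariance of a compact set, continuation, Lipschitz bound from boundedness of the right-hand side) matches the paper's; but the mechanism you use for the key positivity/invariance step is genuinely different. The paper argues sequentially, exploiting the lower-triangular Metzler structure in a fixed order: $x_1>0$ near $t=0$ by continuity and the assumption $x_{01}>0$, then $\dot x_i\ge m_{ii}x_i$ gives $x_i\ge 0$ for $i=2,\dots,n$ by a direct differential inequality (no tangency subtlety arises, because the inhomogeneous terms $m_{ij}x_j$, $j<i$, are already known nonnegative), then $\dot r\ge-\rho r$ gives $r\ge0$, and finally $s\ge0$ from the explicit variation-of-constants formula using $r\ge0$; the $[0,1]$ bounds then come from conservation of the total population. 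You instead establish forward invariance of the whole nonnegative orthant at once via the $\eps$-perturbation (Nagumo-type) argument, recover the upper bound from $\frac{d}{dt}(s+\sum_i x_i+r)=-\mu\cdot x\le0$, and only afterwards obtain the strict positivity statements (2)--(4) by the same scalar Gronwall estimates the paper uses. What your route buys: it does not use $x_{01}>0$ at all for nonnegativity (that hypothesis is only needed for the strict positivity of $x_1$), and it avoids the slightly informal step in the paper where the interval is shrunk so that $x_1>0$ and the positivity must implicitly be bootstrapped through the continuation. What it costs: the extra machinery of the perturbed systems, where you should say a word about why the perturbed solutions exist on a common interval (a priori bounds for the $\eps$-system on the bounded horizon $I$, since the perturbation destroys exact mass conservation) before invoking continuous dependence as $\eps\to0$; also, strictly speaking the perturbed solution is strictly positive only for $t>0$ when some initial components vanish. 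These are routine tightenings, not gaps.
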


\begin{proof}
Since the dynamic is locally Lipschitz, then it is classical that we have local existence and uniqueness of an absolutely continuous solution (see for instance \cite[I.3]{Hale1980}). Let $[0,\tau)$, $\tau\le t_f$, be an interval in which the solution exists.
By continuity of $x_1$ and since $x_1(0)>0$  we can also assume that $x_1>0$ in $[0,\tau)$.

Since $M$ is lower triangular, then
$$
\dot x_2=m_{21}x_1+m_{22}x_2
$$
and since $m_{21}\ge0$ then
$$
\dot x_2\ge m_{22}x_2\quad\mbox{ on }[0,\tau).
$$
This readily implies that $x_2\ge0$ on $[0,\tau)$ (strictly positive if $x_{02}>0$). 
Iterating the procedure and using the properties of $M$, we have that $x_i\ge0$ on $[0,\tau)$ (strictly positive if $x_{0i}>0$) for $i=1,\dots,n$.

Then we have
$$
\dot r\ge-\rho r
$$
which implies $r\ge 0$ on $[0,\tau)$ (strictly positive if $r_{0}>0$). 

Finally, by integration, 
$$
s(t)=\e^{-\int_0^t\beta(\xi)\cdot x(\xi)\,d\xi}\Big(\int_0^t\e^{\int_0^\xi\beta(\tau)\cdot x(\tau)\,d\tau}
\rho r(\xi)\,d\xi+s_0\Big)
$$
which implies that $s(t)\ge0$ in $[0,\tau)$ (strictly positive if $s_0>0$). 

Since the assumptions on the coefficients ensure that the total population is preserved, then we immediately have that
$s(t),r(t)\in[0,1]$, and $x_i(t)\in[0,1]^n$ for $i=1,\dots,n$, for every $t\in[0,\tau)$. Hence the solution can be continued 
and we have global existence of an absolutely continuous solution on $I$ satisfying {\it 2-4}. 
Consequently, by the equations of the system we have that also the derivatives are bounded implying the Lipschitz continuity of the solution.
\end{proof}

\begin{remark}{
The proof works also if $I=[0,+\infty)$.}
\end{remark}

\section{Optimal control}

We aim here to study the optimal control of the system of ODEs under social distance. That is, 
we take 
$$
\beta(t):=\bb-u(t)
$$
where  $u$ is a vectorial control variable. Since it is introduced to reduce 
the transmission rates, then it is natural to require that $u$ belong to a space of bounded functions, like
the space $L^\infty(I;K)$ of (equivalence classes of) Lebesgue measurable functions defined on $I$ and taking values in $K$ up to a set of measure zero,
with 
$$
K=\prod_{i=1}^n[0,\ubar_i],\quad \ubar_i\in(0,\bb],\quad \bb\in(0,1).
$$ 
Here $\bb$ represents the vector of transmission coefficients without any control. The role of the control vector variable $u$ is then to reduce the transmission rates by various levels of social distance, slowdown of the economy, isolation and quarantine measures. 
The value of $\ubar$ depends on the distance policies that can be put into being. 
The choice of $\ubar=\bb$ means that we are able to impose rules that completely stop transmission, and this is compatible only with isolation strategies, but could be unrealistic for other kind of measures. 

The optimal control problem consists in minimizing a {\em cost functional} of the form 
\begin{equation}\label{cost}
J(x,u)=\int_0^{t_f}f_0(t,x,u)\,dt
\end{equation}
where $f_0$ is a given {\em running cost}, under the set of 
{\em state equations} 
\begin{equation}\label{stateeq}
\begin{cases}
\dot s(t)=-s(t)\,\big(\bb-u(t)\big)\cdot x(t)+\rho r(t)\\
\dot x(t)=s(t)\,\big(\bb-u(t)\big)\cdot x(t)\e_1{+}M x(t)\\
\dot r(t)=\sigma\cdot x(t)-\rho r(t)\\
s(0)=s_0,\ x(0)=x_0,\ r(0)=r_0
\end{cases}
\end{equation}
satisfying the 
{\em initial condition assumption} \eqref{wpa}
and  the {\em closed population assumption} \eqref{mc}.
The cost functional $J$ represents the cost of treatments and hospitalization for the populations $x$ of exposed/infected individuals 
and its dependence on $u$ allows to capture 
the economic and social cost of slowdown, isolation, quarantine and social distance measures in general.

\subsection{Existence of an optimal solution}

An {\em optimal solution} to the control problem \eqref{cost}-\eqref{stateeq} is a vector function 
$(u,s,x,r)\in L^\infty(I;K)\times W^{1,\infty}(I)\times W^{1,\infty}(I;\R^n)\times W^{1,\infty}(I)$ that minimizes the cost $J$ and satisfies the set of state equations. 

In the definition above, $W^{1,\infty}(I)$ denotes the usual Sobolev space of functions that are essentially bounded together with the first distributional derivative, while $W^{1,\infty}(I;\R^n):=\big(W^{1,\infty}(I)\big)^n$.

The following existence theorem for a very general cost functional holds.

\begin{theorem}
If $f_0:(0,t_f)\times\R^n\times\R^n\to[0,+\infty)$ is a normal convex integrand, that is 
it is measurable with respect to the Lebesgue $\sigma$-algebra in $(0,t_f)$ and the Borel $\sigma$-algebra in $\R^n\times\R^n$ 
and there exists a subset $N$ of $(0,t_f)$ of Lebesgue measure zero such that  
\begin{enumerate}
\item $f_0(t,\cdot,\cdot)$ is lower semicontinuous for every $t\in(0,t_f)\setminus
N$,
\item $f_0(t,x,\cdot)$ is convex for every $t\in(0,t_f)\setminus
N$ and $x\in\R^n$,
\end{enumerate}
then there exists an optimal solution $(u,s,x,r)$ to the control problem \eqref{cost}-\eqref{stateeq}.
\end{theorem}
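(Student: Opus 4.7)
The plan is to use the direct method of the calculus of variations. Let $m := \inf J$, which is finite and nonnegative by the hypothesis that $f_0 \ge 0$, and pick a minimizing sequence $(u_k, s_k, x_k, r_k)$ with $u_k \in L^\infty(I;K)$ and $(s_k, x_k, r_k)$ the corresponding state trajectory provided by Theorem \ref{wpth}.

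The first step is compactness. Since $K$ is bounded, the sequence $u_k$ is uniformly bounded in $L^\infty(I;\R^n)$, so by the Banach--Alaoglu theorem a subsequence (not relabeled) converges weakly-$*$ in $L^\infty(I;\R^n)$ to some $u$; since $K$ is convex and closed, by Mazur's lemma $u(t) \in K$ for a.e.\ $t$. By Theorem \ref{wpth}, the components of $(s_k, x_k, r_k)$ take values in $[0,1]$, and inspecting the right-hand sides of \eqref{stateeq} one sees that their time-derivatives are uniformly bounded (by a constant depending only on $\bar\beta$, $\bar u$, $\rho$, $M$, $\sigma$). Thus $(s_k, x_k, r_k)$ is bounded in $W^{1,\infty}(I;\R^{n+2})$, and by Arzel\`a--Ascoli we may extract a further subsequence converging uniformly to some $(s,x,r) \in W^{1,\infty}(I;\R^{n+2})$, with the derivatives converging weakly-$*$ in $L^\infty$.

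The second step is to pass to the limit in the state equations written in integral form, e.g.
\begin{equation*}
s_k(t) = s_0 - \int_0^t s_k(\tau)\bigl(\bar\beta - u_k(\tau)\bigr)\cdot x_k(\tau)\,d\tau + \rho\int_0^t r_k(\tau)\,d\tau,
\end{equation*}
and analogously for the $x$ and $r$ components. The products $s_k x_k$ converge uniformly to $s x$, so they converge strongly in $L^1$; combined with the weak-$*$ convergence $u_k \stackrel{*}{\wto} u$, the integrals involving $s_k(\bar\beta - u_k)\cdot x_k$ pass to the limit. The linear terms pass by uniform convergence of the states. Consequently $(s,x,r)$ solves \eqref{stateeq} with control $u$, and by Theorem \ref{wpth} this solution is unique, so $(u,s,x,r)$ is admissible.

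It remains to show $J(x,u) \le \liminf_k J(x_k, u_k) = m$, which is the main obstacle. Here $x_k \to x$ uniformly (hence in measure) and $u_k \wto u$ weakly in $L^p(I;\R^n)$ for every $p\in[1,\infty)$, and $f_0$ is a normal integrand that is convex in the last variable. The classical Ioffe lower semicontinuity theorem (or the Tonelli--Serrin--Buttazzo framework for integral functionals with strong-weak lower semicontinuity, see e.g.\ Buttazzo, \emph{Semicontinuity, Relaxation and Integral Representation}) applies verbatim in this setting and yields
\begin{equation*}
\int_0^{t_f} f_0(t, x(t), u(t))\,dt \;\le\; \liminf_{k\to\infty}\int_0^{t_f} f_0(t, x_k(t), u_k(t))\,dt.
\end{equation*}
Combined with the preceding steps, this gives $J(x,u) \le m$, so $(u,s,x,r)$ is an optimal solution. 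The delicate point of the whole argument is this last joint strong-weak lower semicontinuity; everything else reduces to applying Theorem \ref{wpth} together with standard compactness.
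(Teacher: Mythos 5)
Your proposal is correct and follows essentially the same route as the paper: the Direct Method with weak-$*$ compactness of the controls, closedness of the admissible set obtained by passing to the limit in the state equations (your integral-form argument is exactly what the paper summarizes as ``$\Lambda$ is closed under weak-$*$ convergence''), and the De Giorgi--Ioffe strong--weak lower semicontinuity theorem for the cost. Your version simply spells out the steps the paper only sketches; the only cosmetic slip is asserting that $\inf J$ is finite (nonnegativity of $f_0$ gives only $\inf J\ge 0$), but if the infimum were $+\infty$ every admissible quadruple would be optimal, so nothing is lost.
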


To prove the existence of an optimal solution we could invoke some very general theorems, like Theorem 23.11 of \cite{Clarke2013}, that can be applied to a lot of other situations. To be self contained and since it will become useful in the sequel,
we prefer to sketch here a direct proof based on the observation that it is equivalent to prove the existence of a minimizer
of the functional 
\begin{equation}\label{minF}
F(u,s,x,r):=J(x,u)+\chi_{\Lambda}(u,s,x,r)
\end{equation}
where $\Lambda$ is the set of admissible pairs, that is all state-control vectors $(u,s,x,r)$ that satisfy the initial value problem \eqref{stateeq}, while $\chi_\Lambda $ denotes the indicator function of $\Lambda$ that takes the value $0$ on $\Lambda$ and $+\infty$ otherwise.

\vspace{1ex}
 
\begin{proof} On the domain of $F$, that is the space $L^\infty(I;K)\times W^{1,\infty}(I)\times W^{1,\infty}(I;\R^n)\times W^{1,\infty}(I)$ we consider the topology given by the product of the weak* topologies of the four spaces and aim to prove sequential lower semicontinuity and coercivity  
of the functional $F$ with respect to this topology. By the 
Direct Method of the Calculus of Variations, these properties imply the existence of a solution to the minimum problem. They are direct consequences of the  
fact that the space of control is weakly* compact, that the assumptions on $f_0$ imply that the cost functional $J$ is weakly* lower semicontinuous 
(which is a particular case of De Giorgi and Ioffe's Theorem; see for instance \cite[Theorem 7.5]{FL_MMCVLp}) and the fact that $\Lambda$ is closed with respect to the weak* convergence. 
\end{proof}

\begin{remark}{
The requirement on $f_0=f_0(t,x,u)$ to be a normal convex integrand is satisfied, in particular, if it is a piecewise continuous function of $t$, continuous in $x$ and convex in $u$. Assumptions of this kind are usually satisfied in the applications.}
\end{remark}

\section{Optimality conditions} 

To write necessary conditions of optimality we require that $f_0$ satisfies the classical regularity assumption  $f_0\in C^1([0,t_f]\times[0,1]^n\times[0,\bb])$ and be nonnegative. 
Let us introduce the adjoint variables $p_0\ge0$, $p_s\in\R$, $p_x=(p_{x_1},...,p_{x_n})\in\R^n$ and the Hamiltonian
$$
H(t,u,s,x,r,p_0,p_s,p_x)=p_0f_0+p_s f_s+p_x\cdot f_x+p_r\cdot f_r
$$
where $f_s=-s\,(\bb-u)\cdot x+\rho r$, $f_x=s\,(\bb-u)\cdot x\,\e_1+M x$, $f_r=\sigma\cdot x-\rho r$   are the dynamics of the state equations. 
After some manipulations,  the Hamiltonian turns out to be
\begin{eqnarray*}
&&H(t,u,s,x,r,p_0,p_s,p_x,p_r)=\\
&&\hspace{4ex}=p_0f_0(t,x,u)+(p_{x_1}-p_s) s(\bb-u)\cdot x+\rho (p_s-p_r) r+ p_x\cdot M x+p_r\,\sigma\cdot x.
\end{eqnarray*}
By Pontryagin's theorem (see for instance \cite[Section IV.22]{Clarke2013}, \cite[Section 2.2.2]{SLbook}), given an optimal solution $(u,s,x,r)$, there exist a nonnegative constant $p_0$ and absolutely continuous adjoint (or conjugate) state functions (or costates) $p_s$, $p_x$ and $p_r$ that satisfy the non-degeneration property
\begin{equation}\label{ndc}
\big(p_0,p_s(t),p_x(t),p_r(t))\ne0\quad\forall\,t\in[0,t_f]
\end{equation}
and such that
\begin{eqnarray*}
&&H\big(t,u(t),s(t),x(t),r(t),p_0,p_s(t),p_x(t),p_r(t)\big)=\\
&&\hspace{12ex}\displaystyle=\inf_{u\in K}\!H\big(t,u,x(t),r(t),p_0,p_s(t),p_x(t),p_r(t)\big)
\end{eqnarray*}
for almost every $t\in[0,t_f]$. 
This is a minimum problem for a continuous function of $n$ real variables on a compact set. 
To solve it explicitly we should prescribe the running cost $f_0$. 

The adjoint states $p_s$, $p_x$ and $p_r$ must solve the adjoint (or conjugate) equations 
\begin{equation*}
\left\{
\begin{aligned}
&\dot p_s=-\frac{\partial H}{\partial s}\\
&\dot p_{x}=-\frac{\partial H}{\partial {x}}\\
&\dot p_r=-\frac{\partial H}{\partial r}\\
\end{aligned}
\right.
\end{equation*}
where $\frac{\partial }{\partial x}:=(\frac{\partial }{\partial x_i})_{i=1,...,n}$, that is
\begin{equation*}
\begin{cases}
\dot p_s=-(p_{x_1}-p_s)(\bb-u)\cdot x\\[1ex]
\displaystyle\dot p_{x}=-p_0\frac{\partial f_0}{\partial x}(t,x,u)-(p_{x_1}-p_s)s(\bb-u){-}M^Tp_{x}-p_r\sigma\\
\dot p_r=\rho (p_s-p_r)
\end{cases}
\end{equation*}
and have to satisfy the transversality conditions
\begin{equation}\label{aetc}
p_s(t_f)=p_{x_i}(t_f)=p_r(t_f)=0
\end{equation}
coming from the fact the final states are free.

\begin{remark}\label{lcp}{
By the non-degeneration property \eqref{ndc}, the transversality conditions $p_s(t_f)$ $= p_{x_i}(t_f)=p_r(t_f)=0$ imply that $p_0>0$.
Thus, without loss of generality, we can assume from now on that $p_0=1$.}
\end{remark}

\begin{remark}\label{bas}{
Since $f_0$ is $C^1$, then $\frac{\partial f_0}{\partial x}$ is continuous and hence bounded on $[0,t_f]$. By the adjoint equation it then follows that the adjoint states are Lipschitz continuous.}
\end{remark}

If the integrand $f_0$ is time independent, that is $f_0=f_0(x,u)$, then also the Hamiltonian is time independent and therefore it is constant along the optimal solutions, that is, there exists a constant $k$ such that
\begin{equation*}
f_0(x,u)+\big(p_{x_1}-p_s\big) s\big(\bb-u\big)\cdot x+\gamma \big(p_s-p_r\big) r+ p_x\cdot M x+p_r\sigma\cdot x=k
\end{equation*}
on the interval $[0,t_f]$. 

\vspace{2ex}

In the next sections we consider particular cost functionals in which the state and control variables are separated.
From the point of view of the solutions, the optimal control problem exhibits very different behaviors depending on how the cost grow with 
the control variable.

\section{Cost with a superlinear growth in the control variable}\label{sssup}

Let us consider now the case of a running cost of the form
\begin{equation}\label{f0}
f_0(t,x,u)=\nu(t,x)+\sum_{i=1}^nC_i u_i^{q_i}
\end{equation}
where $\nu\in C^1([0,t_f]\times[0,1]^n)$ is a non negative function, $C_i$ are strictly positive constants and $q_i>1$ for $i=1,\dots,n$.
A remarkable particular case is the quadratic one, in which $q_i=2$ for every $i$.

These assumptions allow to capture a nonlinear growth of costs due to overcrowding in healthcare facilities and to gradually higher level of slowdown of the economy, with various degrees of nonlinearity. The different constants and different exponents allow to prescribe different costs to different distance and slowdown policies that are simultaneosly actuated.

The Hamiltonian is
\begin{eqnarray*}
&&\hspace{-4ex}H(t,u,s,x,r,p_s,p_x,p_r)=\\
&&\hspace{-2ex}\displaystyle=\nu(t,x)+\sum_{i=1}^nC_i u_i^{q_i}+(p_{x_1}-p_s) s(\bb-u)\cdot x+\rho (p_s-p_r) r+ p_x\cdot M x+p_r\,\sigma\cdot x.
\end{eqnarray*}
The minum problem for the function 
$$
u\mapsto H(t,u,s,x,r,p_s,p_x,p_r)
$$
on the compact set  $K=\prod_{i=1}^n[0,\ubar_i]$ is easy to solve. The critical interior points must satisfy
$$
\frac{\partial H}{\partial u_i}=C_iq_iu_i^{q_i-1}-(p_{x_1}-p_s){sx_i}=0\ \iff\ u_i^{q_i-1}=\frac{1}{q_iC_i}(p_{x_1}-p_s){sx_i}\,.
$$
Hence, setting 
$$
\psi_i(t):=\frac{1}{q_iC_i}\big(p_{x_1}(t)-p_s(t)\big)s(t)x_i(t),
$$
the optimal control is characterized by the following componentwise conditions
\begin{eqnarray}\label{cuSIR}
u_i(t)
&=&\min\{\psi_i^+(t)^\frac{1}{q_i-1},\ubar_i\}\nonumber\\
&=&\begin{cases}
0&\mbox{ if }\psi_i(t)\le0,\\[0ex]
\psi_i(t)^\frac{1}{q_i-1}&\mbox{ if }\psi_i(t)\in(0,\ubar_i^{q_i-1}),\\[0ex]
\ubar_i&\mbox{ if }\psi_i(t)\ge\ubar_i^{q_i-1}
\end{cases}
\end{eqnarray}
where $\psi_i^+(t):=\max\{\psi_i(t),0\}$.

\begin{proposition}\label{qcuLc}
Any optimal control $u$ is Lipschitz continuous on $[0,t_f]$ and satisfies the final condition $u(t_f)=0$.
\end{proposition}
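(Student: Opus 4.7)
The plan is to read off both claims directly from the pointwise characterization \eqref{cuSIR}, combined with the regularity already available for the states and the adjoints.

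For the terminal value the transversality conditions \eqref{aetc} give $p_s(t_f)=p_{x_1}(t_f)=0$, so
$$\psi_i(t_f)=\frac{1}{q_iC_i}\bigl(p_{x_1}(t_f)-p_s(t_f)\bigr)s(t_f)x_i(t_f)=0$$
for every $i$, and \eqref{cuSIR} immediately forces $u_i(t_f)=\min\{0,\ubar_i\}=0$. This part is essentially free.

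For Lipschitz continuity I would split the argument into two steps. First, each $\psi_i$ is a product of bounded Lipschitz functions: by Theorem \ref{wpth} the states $s$ and $x_i$ are Lipschitz on $I$ with values in $[0,1]$, while Remark \ref{bas} ensures that the adjoints $p_s$ and $p_{x_1}$ are Lipschitz (and in particular bounded) on $I$. Hence $\psi_i$ is Lipschitz on $[0,t_f]$. Second, I would verify that the scalar selector $\Phi_i(y):=\min\bigl\{y_+^{1/(q_i-1)},\ubar_i\bigr\}$ is Lipschitz on $\R$: the positive-part map is $1$-Lipschitz, the truncation at $\ubar_i$ preserves Lipschitz constants, and the intermediate power $y\mapsto y^{1/(q_i-1)}$ is of class $C^1$ on the relevant range $[0,\ubar_i^{q_i-1}]$ provided its exponent $1/(q_i-1)$ is at least $1$, i.e.\ $q_i\in(1,2]$. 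Composing the Lipschitz function $\Phi_i$ with the Lipschitz function $\psi_i$ then yields that $u_i=\Phi_i\circ\psi_i$ is Lipschitz on $[0,t_f]$.

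The main obstacle is precisely the behaviour of $\Phi_i$ near the origin: if $q_i>2$ the power map has an unbounded derivative at $0$ and the composition would only be H\"older continuous of exponent $1/(q_i-1)$. The Lipschitz statement therefore implicitly relies on the restriction $q_i\in(1,2]$ already singled out in the abstract for the uniqueness analysis, and under this restriction no further delicate issue arises; the crossings between the interior regime $\psi_i\in(0,\ubar_i^{q_i-1})$ and the two boundary regimes are handled automatically by the Lipschitz continuity of $\Phi_i$.
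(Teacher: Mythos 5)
Your proof is correct and follows essentially the same route as the paper's: the terminal condition comes from the transversality conditions forcing $\psi_i(t_f)=0$, and Lipschitz continuity comes from composing the Lipschitz switching functions $\psi_i$ (products of the bounded Lipschitz states and costates, cf.\ Theorem \ref{wpth} and Remark \ref{bas}) with the selector $y\mapsto\min\{y_+^{1/(q_i-1)},\ubar_i\}$ from \eqref{cuSIR}. Your caveat about $q_i\in(1,2]$ is well taken and goes beyond the paper's own one-line proof: Section \ref{sssup} only assumes $q_i>1$, and for $q_i>2$ the selector is merely H\"older of exponent $1/(q_i-1)$ near the origin, so the Lipschitz conclusion genuinely requires the restriction you identify (otherwise only H\"older continuity of $u$ follows from this argument).
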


\begin{proof} It follows by the previous characterization of the optimal control and by the fact that the  
states and the costates are Lipschitz continuous functions. The final condition follows by the fact that 
the transversality conditions imply that $\psi_i(t_f)=0$, $i=1,\dots,n$.
\end{proof}

The adjoint states $p_s$, $p_x$ and $p_r$ must solve the adjoint equations and transversality conditions 
\begin{equation}\label{eaSIRsdnu}
\left\{
\begin{aligned}
&\dot p_s=-\eta(\bb-u)\cdot x\\
&\dot p_{x}=-\frac{\partial \nu}{\partial x}(t,x)-(p_{x_1}-p_s)s(\bb-u)-M^Tp_{x}-p_r\sigma\\
&\dot p_r=\rho (p_s-p_r)\\[1ex]
&p_s(t_f)=p_{x_i}(t_f)=p_r(t_f)=0.
\end{aligned}
\right.
\end{equation}

\subsection{Uniqueness of the optimal solution}

The problem of uniqueness of the optimal solution is of great importance in applications and nevertheless it is not 
a trivial question because of the nonlinearity of the state equations that lead to a lack of convexity of the functional $F=J+\chi_\Lambda$ (see \eqref{minF}) even if the cost is strictly convex.  

Nevertheless, we are able to prove the uniqueness of the solution when the cost is superlinear in all control variables with exponents $q_i\in(1,2]$. Moreover, the result holds only for a sufficiently small time horizon. The basic idea of the proof is due to Fister \cite{FLMcN1998} where, on the other hand, only the case $q_i=2$ is considered and for a control problem (for the chemotherapy in {HIV}) that does not fall into our abstract setting. 

Using the previuos discussion, we have that any optimal solution must solve the {\em optimality system}
given by the boundary value problems for the state and adjoint equations, and the characterization of the optimal control, that is 
\begin{equation}\label{os}
\begin{cases}
\dot s=-s\,(\bb-u)\cdot x+\rho r\\
\dot x=s\,(\bb-u)\cdot x\e_1+M x\\
\dot r=\sigma\cdot x-\rho r\\
\dot p_s=-(p_{x_1}-p_s)(\bb-u)\cdot x\\
\displaystyle\dot p_{x}=-\frac{\partial \nu}{\partial x}-(p_{x_1}-p_s)s(\bb-u)-M^Tp_{x}-p_r\sigma\\
\dot p_r=\rho (p_s-p_r)\\
s(0)=s_0,\ x(0)=x_0,\ r(0)=r_0\\
p_s(t_f)=p_{x_i}(t_f)=p_r(t_f)=0\\
\displaystyle u_i(t)=\min\big\{\max\{\frac{(p_{x_1}(t)-p_s(t))s(t)x_i(t)}{q_iC_i},0\}^\frac{1}{q_i-1},\ubar_i\big\},\quad i=1,\dots,n.
\end{cases}
\end{equation}

Using the optimality system we can prove the following uniqueness result.  
 
\begin{theorem} Let the running cost take the form \eqref{f0} with $q_i\in(1,2]$ for $i=1,\dots,n$ and $\nu\in C^1([0,t_f]\times[0,1]^n)$ non negative
and with Lipschitz continuous partial derivatives with respect to $x$ with a $t$-independent Lischitz constant, that is, there exists $L\ge0$ such that
\begin{equation}\label{lcnu}
|\frac{\partial \nu}{\partial x}(t,y)-\frac{\partial \nu}{\partial x}(t,z)|\le L|y-z|\qquad\forall\, x,y\in[0,1]^n, t\in[0,t_f].
\end{equation}
If $t_f$ is small enough than the optimal solution is unique.
\end{theorem}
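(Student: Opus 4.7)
The plan is to follow Fister's contraction scheme and adapt it to the exponent range $q_i\in(1,2]$ rather than just $q_i=2$. Suppose, toward uniqueness, that $(u,s,x,r)$ and $(\tilde u,\tilde s,\tilde x,\tilde r)$ are two optimal solutions with corresponding adjoint variables $(p_s,p_x,p_r)$ and $(\tilde p_s,\tilde p_x,\tilde p_r)$; both must satisfy the optimality system \eqref{os}. I would introduce the differences $w_s=s-\tilde s$, $w_x=x-\tilde x$, $w_r=r-\tilde r$, $q_s=p_s-\tilde p_s$, $q_x=p_x-\tilde p_x$, $q_r=p_r-\tilde p_r$ and $v=u-\tilde u$. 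By Theorem \ref{wpth} the state variables lie in $[0,1]$; by Remark \ref{bas} and the transversality conditions, the adjoint variables are bounded on $[0,t_f]$ by a constant that stays controlled as $t_f$ shrinks. Let $M_0$ denote a common $L^\infty$-bound.

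The first step is a Lipschitz bound of $v$ in terms of the other differences. For each $i$ I would set
\begin{equation*}
\Phi_i(\psi):=\min\big\{\max\{\psi,0\}^{1/(q_i-1)},\ubar_i\big\}.
\end{equation*}
Since $1/(q_i-1)\ge 1$ for $q_i\in(1,2]$, the map $\psi\mapsto\max\{\psi,0\}^{1/(q_i-1)}$ is $C^1$ on $\R$ with derivative $\frac{1}{q_i-1}\max\{\psi,0\}^{(2-q_i)/(q_i-1)}$; the exponent $(2-q_i)/(q_i-1)$ is nonnegative for $q_i\le 2$, so on the region where the cap $\ubar_i$ is inactive this derivative is bounded by $\frac{1}{q_i-1}\ubar_i^{2-q_i}$. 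Hence $\Phi_i$ is globally Lipschitz on $\R$. Writing $\psi_i-\tilde\psi_i$ with the help of $s,x_i\in[0,1]$ and $|p_{x_1}-p_s|\le 2M_0$, the last equation of \eqref{os} then yields
\begin{equation*}
|v_i(t)|\le K_1\big(|q_{x_1}(t)|+|q_s(t)|+|w_s(t)|+|w_{x_i}(t)|\big),
\end{equation*}
with $K_1$ depending on $M_0$, $C_i$ and $q_i$ but not on $t_f$.

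Next I would subtract the state and adjoint equations pair by pair. Using bilinearity one finds, for instance,
\begin{equation*}
\dot w_s=-w_s(\bb-u)\cdot x-\tilde s(\bb-u)\cdot w_x+\tilde s\,v\cdot\tilde x+\rho\,w_r,
\end{equation*}
and analogous expressions for $\dot w_x$, $\dot w_r$, $\dot q_s$, $\dot q_x$, $\dot q_r$; assumption \eqref{lcnu} is used precisely to control the nonlinear-in-state contribution $\frac{\partial\nu}{\partial x}(t,x)-\frac{\partial\nu}{\partial x}(t,\tilde x)$ appearing in $\dot q_x$. Because $w_s(0)=w_x(0)=w_r(0)=0$ and $q_s(t_f)=q_x(t_f)=q_r(t_f)=0$, I would integrate the state differences forward and the adjoint differences backward to obtain
\begin{equation*}
|w_s(t)|+|w_x(t)|+|w_r(t)|\le K_2\int_0^t\big(|w_s|+|w_x|+|w_r|+|v|\big)\,d\tau,
\end{equation*}
\begin{equation*}
|q_s(t)|+|q_x(t)|+|q_r(t)|\le K_3\int_t^{t_f}\big(|w_s|+|w_x|+|w_r|+|q_s|+|q_x|+|q_r|+|v|\big)\,d\tau,
\end{equation*}
with $K_2,K_3$ independent of $t_f$.

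Finally I would set $\Xi(t):=|w_s(t)|+|w_x(t)|+|w_r(t)|+|q_s(t)|+|q_x(t)|+|q_r(t)|$, insert the bound on $|v|$ into both estimates, and use that the integrals over $[0,t]$ and $[t,t_f]$ are both dominated by the integral over $[0,t_f]$. This yields $\Xi(t)\le K_4\int_0^{t_f}\Xi(\tau)\,d\tau$ for every $t\in[0,t_f]$, with $K_4$ independent of $t_f$. Taking the supremum in $t$ gives $\|\Xi\|_\infty\le K_4 t_f\|\Xi\|_\infty$, which forces $\Xi\equiv 0$ as soon as $t_f<1/K_4$; in particular $u=\tilde u$ and the optimal solution is unique. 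I expect the main obstacle to be the first step: for $q_i<2$ the exponent $1/(q_i-1)$ exceeds $1$, so without the truncation $\Phi_i$ would fail to be Lipschitz at the origin. It is exactly the flatness at $0$---available because $q_i\le 2$---combined with the cap $\ubar_i$, that delivers the global Lipschitz estimate on which the entire contraction rests; obtaining this uniformly across the whole range $q_i\in(1,2]$ is the genuine extension over Fister's original quadratic argument.
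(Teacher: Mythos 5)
Your argument is correct, and it shares the paper's skeleton (both solutions are forced into the optimality system \eqref{os}; the decisive new ingredient over Fister's quadratic case is a Lipschitz estimate for the control characterization, valid precisely because $q_i\le 2$), but the way you close the contraction is genuinely different. The paper follows Fister literally: it introduces the weighted variables $\e^{-\lambda t}(s,x,r)$ and $\e^{\lambda t}(p_s,p_x,p_r)$, multiplies the subtracted equations by the differences, integrates, and obtains weighted $L^2$ energy inequalities whose sum gives $(\lambda-D\e^{\lambda t_f}-E)\int(\cdot)\le 0$, after which one picks $\lambda\ge D+E$ and $t_f<\frac1\lambda\ln\frac{\lambda-E}{D}$. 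You dispense with the weights altogether and run a sup-norm forward/backward integral inequality, $\Xi(t)\le K_4\int_0^{t_f}\Xi$, hence $\|\Xi\|_\infty\le K_4t_f\|\Xi\|_\infty$, which is arguably more elementary; both arguments hinge in the same way on the split boundary data ($w$'s vanish at $0$, $q$'s at $t_f$), which is exactly why neither extends past a small horizon (cf.\ Remark \ref{unique}). Your treatment of the exponent range also differs slightly in flavor: you get a global Lipschitz constant $\frac{1}{q_i-1}\ubar_i^{2-q_i}$ for the truncated map $\Phi_i$ using the cap $\ubar_i$, whereas the paper invokes local Lipschitz continuity of $y\mapsto y^{1/(q_i-1)}$ on the bounded range of $\psi_i$ guaranteed by the boundedness of states and costates (Remark \ref{bas}); the two are equivalent in effect, and yours makes the constant explicit. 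Two minor points: at $q_i=2$ the map $\psi\mapsto\max\{\psi,0\}^{1/(q_i-1)}$ is Lipschitz but not $C^1$ at the origin, so phrase that step as a piecewise bound on the a.e.\ derivative; and your remark that the costate bound $M_0$ stays controlled as $t_f$ shrinks deserves one line of justification (backward linear ODE with bounded coefficients and zero terminal data), a caveat the paper's constants $D,E$ share but leave implicit.
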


\begin{proof} Let us assume that $(u,s,x,r)$ and $(\tu,\ts,\tx,\tr)$ are two optimal solutions of the control problem. Then 
  $(u,s,x,r,p_s,p_x,p_r)$ and $(\tu,\ts,\tx,\tr,\tp_s,\tp_x,\tp_r)$ are two solutions of the optimality system 
\eqref{os}. 

To be more contained, it will be useful in the sequel of the proof 
to go back to the shorter notation $\beta=\bb-u$ and $\tbeta=\bb-\tu$.

Inspired by \cite{FLMcN1998}, let us introduce for any $\lambda\ge0$ the functions
$$
s^{\lambda}:=\e^{-\lambda t}s,\quad x^{\lambda}:=\e^{-\lambda t}x,\quad r^{\lambda}:=\e^{-\lambda t}r,
$$
$$
p_s^{\lambda}:=\e^{\lambda t}p_s,\quad p_x^{\lambda}:=\e^{\lambda t}p_x,\quad p_r^{\lambda}:=\e^{\lambda t}p_r,
$$
and the analogous ones with the\ $\tilde{}$ variables. 

Substituting in the optimality system we obtain
the family of equivalent systems (one for every $\lambda$)
\begin{equation*}\label{osl}
\begin{cases}
\dot s^\lambda+\lambda s^\lambda=-\e^{\lambda t}s^{\lambda}\,\beta\cdot x^{\lambda}+\rho r^{\lambda}\\
\dot x^\lambda+\lambda x^\lambda=\e^{\lambda t}s^{\lambda}\,\beta\cdot x^{\lambda}\e_1+M x^{\lambda}\\
\dot r^\lambda+\lambda r^\lambda= \sigma\cdot x^{\lambda}-\rho r^{\lambda}\\
\dot p_s^\lambda-\lambda p_s^\lambda=-\e^{\lambda t}(p^{\lambda}_{x_1}-p^{\lambda}_s)\beta\cdot x^{\lambda}\\
\dot p_{x}^\lambda-\lambda p_{x}^\lambda =-\e^{\lambda t}\frac{\partial \nu}{\partial x}-\e^{\lambda t}(p^{\lambda}_{x_1}-p^{\lambda}_s)s^{\lambda}\beta
-M^Tp^{\lambda}_{x}-p^{\lambda}_r\sigma\\
\dot p_r^\lambda-\lambda p_r^\lambda=\rho (p^{\lambda}_s-p^{\lambda}_r)\\
s^{\lambda}(0)=s_0,\ x^{\lambda}(0)=x_0,\ r^{\lambda}(0)=r_0\\
p^{\lambda}_s(t_f)=p^{\lambda}_{x_i}(t_f)=p^{\lambda}_r(t_f)=0
\end{cases}
\end{equation*}
and the analogous one with the\ $\tilde{}$ variables. 
We start by considering the equations corresponding to the state $x$ and its conjugate $p_x$, that is 
$$
\begin{cases}
\dot x^\lambda+\lambda x^\lambda=\e^{\lambda t}s^{\lambda}\,\beta\cdot x^{\lambda}\,\e_1+M x^{\lambda}\\
\dot p_{x}^\lambda-\lambda p_{x}^\lambda =-\e^{\lambda t}\frac{\partial\nu}{\partial x}-\e^{\lambda t}(p^{\lambda}_{x_1}-p^{\lambda}_s)s^{\lambda}
\beta-M^Tp^{\lambda}_{x}-p^{\lambda}_r\sigma\\
s^{\lambda}(0)=s_0,\ x^{\lambda}(0)=x_0,\ r^{\lambda}(0)=r_0\\
p^{\lambda}_s(t_f)=p^{\lambda}_{x_i}(t_f)=p^{\lambda}_r(t_f)=0.
\end{cases}
$$
Subtracting side by side,
scalarly multiplying the first equation by $x^\lambda-\tx^\lambda$ and the second by $p_x^\lambda-\tp_x^\lambda$, and integrating with the usage of the boundary conditions, we obtain
\begin{eqnarray*}
&&\dfrac{\big|x^\lambda(t_f)-\tx^\lambda(t_f)\big|^2}{2}+\lambda \int_0^{t_f}|x^\lambda-\tx^\lambda|^2dt=\\
&&\hspace{8ex}\displaystyle=\int_0^{t_f}\e^{\lambda t}(x_1^\lambda-\tx_1^\lambda)(s^{\lambda}\beta\cdot x^{\lambda}-\ts^{\lambda}\tbeta\cdot \tx^{\lambda})+(x^{\lambda}-\tx^{\lambda})\cdot M (x^{\lambda}-\tx^{\lambda})\,dt,
\end{eqnarray*}
\begin{eqnarray*}
&&\displaystyle\frac{\big|p_{x}^\lambda(0)-\tp_{x}^\lambda(0)\big|^2}{2}+\lambda \int_0^{t_f}|p_{x}^\lambda-\tp_{x}^\lambda|^2dt=\\ 
&&\hspace{13ex}\displaystyle=\int_0^{t_f}\e^{\lambda t}(p_{x}^\lambda-\tp_{x}^\lambda)\cdot\Big(\frac{\partial \nu}{\partial x}(x)-\frac{\partial \nu}{\partial x}(\tx)\Big)\,dt\\
&&\hspace{16ex}\displaystyle+\int_0^{t_f}\e^{\lambda t}(p_{x}^\lambda-\tp_{x}^\lambda)\cdot\Big((p^{\lambda}_{x_1}-p^{\lambda}_s)s^{\lambda}\beta-(\tp^{\lambda}_{x_1}-\tp^{\lambda}_s)\ts^{\lambda}\tbeta\Big)\,dt\\
&&\hspace{16ex}\displaystyle+\int_0^{t_f}(p^{\lambda}_{x}-\tp^{\lambda}_{x})\cdot M^T(p_{x}^\lambda-\tp_{x}^\lambda)-(p_{x}^\lambda-\tp_{x}^\lambda)
\cdot(p^{\lambda}_r-\tp^{\lambda}_r)\sigma\,dt
\end{eqnarray*}
Let us now estimate the right hand sides. Concerning the first equation, since 
$$
s^{\lambda}\beta\cdot x^{\lambda}-\ts^{\lambda}\tbeta\cdot \tx^{\lambda}=(s^{\lambda}-\ts^{\lambda})\beta\cdot x^{\lambda}+
\ts^{\lambda} (\beta-\tbeta)\cdot x^{\lambda}+\ts^{\lambda}\tbeta\cdot
(x^{\lambda}-\tx^{\lambda})
$$
and since the states, the costates and the controls are bounded (see Remark \ref{bas}), then by triangular and Young  inequalities we have  
that there exists a positive constant $D_{11}$ such that
\begin{eqnarray*}
&&\displaystyle\Big|\int_0^{t_f}\e^{\lambda t}(x_1^\lambda-\tx_1^\lambda)(s^{\lambda}\beta\cdot x^{\lambda}-\ts^{\lambda}\tbeta\cdot \tx^{\lambda})\,dt\Big|
\le\\ 
&&\hspace{8ex}\displaystyle\le D_{11}\e^{\lambda t_f}\Big(\int_0^{t_f}|x^\lambda-\tx^\lambda|^2+|s^{\lambda}-\ts^{\lambda}|^2+
|u-\tu|^2\,dt\Big)
\end{eqnarray*}
where we used also the fact that $\beta-\tbeta=u-\tu$. On the other hand, using the characterization of the optimal control we get
\begin{eqnarray*}
\int_0^{t_f}|u-\tu|^2\,dt&\le&\sum_{i=1}^n\int_0^{t_f}|\psi_i^+(t)^\frac{1}{q_i-1}-\tpsi_i^+(t)^\frac{1}{q_i-1}|^2\,dt\\
&\le&D_{12}\sum_{i=1}^n\int_0^{t_f}|\psi_i^+(t)-\tpsi_i^+(t)|^2\,dt\\
&\le&D_{12}\sum_{i=1}^n\int_0^{t_f}|\psi_i(t)-\tpsi_i(t)|^2\,dt\\
&\le& D_{12}\int_0^{t_f}|p_x^\lambda-\tp_x^\lambda|^2+|p_s^\lambda-\tp_s^\lambda|^2+|x^\lambda-\tx^\lambda|^2+|s^\lambda-\ts^\lambda|^2\,dt
\end{eqnarray*}
for a suitable positive constant $D_{12}$ (possibly changing line by line). We used here the assumption $q_i\le 2$ and the local Lipschitz continuity of the power function $y^\frac{1}{q_i-1}$ ($y\ge0$) together with the boundedness of states and costates.

Putting together with the previuos one and estimating the other term of the equation in an analogous way, we end up with the existence of a positive constant $D_1$ such that
\begin{eqnarray*}
&&\dfrac{\big|x^\lambda(t_f)-\tx^\lambda(t_f)\big|^2}{2}+\lambda \int_0^{t_f}|x^\lambda-\tx^\lambda|^2dt\\
&&\hspace{8ex}\le D_1\e^{\lambda t_f}\Big(\int_0^{t_f}|p_x^\lambda-\tp_x^\lambda|^2+|p_s^\lambda-\tp_s^\lambda|^2+|x^\lambda-\tx^\lambda|^2+|s^\lambda-\ts^\lambda|^2\,dt\Big).
\end{eqnarray*}
To estimate the right hand side of the second equation we use assumption \eqref{lcnu} and obtain that there exist positive constants $D_2$ and $E_2$ such that
\begin{eqnarray*}
&&\displaystyle\frac{\big|p_{x}^\lambda(0)-\tp_{x}^\lambda(0)\big|^2}{2}+\lambda \int_0^{t_f}|p_{x}^\lambda-\tp_{x}^\lambda|^2dt\le\\ 
&&\hspace{8ex}\displaystyle\le D_2\e^{\lambda t_f}\int_0^{t_f}|p_{x}^\lambda-\tp_{x}^\lambda|^2+|{x}^\lambda-\tx^\lambda|^2+
|p_{s}^\lambda-\tp_{s}^\lambda|^2+|{s}^\lambda-\ts^\lambda|^2\,dt\\
&&\hspace{11ex}\displaystyle+E_2\int_0^{t_f}|p_{r}^\lambda-\tp_{r}^\lambda|^2dt.
\end{eqnarray*}
Doing analogous estimates with the other two couples of state/costate equations, and summing up, we get
\begin{eqnarray*}
&&\hspace{-4ex}\frac12\big|s^\lambda(t_f)-\ts^\lambda(t_f)\big|^2+\dfrac12\big|x^\lambda(t_f)-\tx^\lambda(t_f)\big|^2+
\frac12\big|r^\lambda(t_f)-\tr^\lambda(t_f)\big|^2\\
&&\hspace{0ex}+\frac12\big|p_s^\lambda(0)-\tp_s^\lambda(0)\big|^2+
\dfrac12\big|p_x^\lambda(0)-\tp_x^\lambda(0)\big|^2+
\dfrac12\big|p_r^\lambda(0)-\tp_r^\lambda(0)\big|^2\\
&&\hspace{0ex}+\lambda \Big(\int_0^{t_f}|s^\lambda-\ts^\lambda|^2+|x^\lambda-\tx^\lambda|^2+|r^\lambda-\tr^\lambda|^2+\\
&&\hspace{22ex}+|p_s^\lambda-\tp_s^\lambda|^2+|p_x^\lambda-\tp_x^\lambda|^2+|p_r^\lambda-\tp_r^\lambda|^2 dt\Big)\\
&&\hspace{-2ex}\le (D\e^{\lambda t_f}+E)
\cdot\Big(\int_0^{t_f}|s^\lambda-\ts^\lambda|^2+|x^\lambda-\tx^\lambda|^2+|r^\lambda-\tr^\lambda|^2\\
&&\hspace{22ex}
+|p_s^\lambda-\tp_s^\lambda|^2+|p_x^\lambda-\tp_x^\lambda|^2+|p_r^\lambda-\tp_r^\lambda|^2 dt\Big)
\end{eqnarray*}
for suitable positive constants $D$ and $E$.
This implies that 
\begin{eqnarray*}
&&(\lambda-D\e^{\lambda t_f}-E)\Big(\int_0^{t_f}|s^\lambda-\ts^\lambda|^2+|x^\lambda-\tx^\lambda|^2+|r^\lambda-\tr^\lambda|^2\\
&&\hspace{22ex}+|p_s^\lambda-\tp_s^\lambda|^2+|p_x^\lambda-\tp_x^\lambda|^2+|p_r^\lambda-\tp_r^\lambda|^2 dt\Big)\le0
\end{eqnarray*}
for every $\lambda\ge0$. By choosing $\lambda$ such that
$\lambda\ge D+E$ and 
$$
t_f<\frac{1}{\lambda}\ln\big(\dfrac{\lambda-E}{D}\big)
$$ 
we obtain that $\lambda-D\e^{\lambda t_f}-E>0$ and 
this implies that the integral is zero and therefore the two solutions are equal. 
\end{proof}

\begin{remark}\label{unique}{
It is important to remark that this is not a local uniqueness result, but a global result that holds for a small $t_f$. 
Indeed, the proof essentially relies on the transversality boundary conditions $p_s(t_f)=p_x(t_f)=p_r(t_f)=0$.
If the integration would be performed in an interval $[0,T]$ with $T\ne t_f$ then the proof was not work 
because, in general, the costates do not vanish in $T$.
This makes impossible to extend the result besides the time $t_f$ by proving it in $[0,T]$ and using the values of states and costates in $T$ to iterate the procedure. The uniqueness of the solution for every $t_f$ is still an open problem.
On the other hand, however, uniqueness is quite secondary with respect to having a global
optimal solution. }
\end{remark}

\section{The case of a linear cost in the control variable}\label{slu}

Let us consider the case in which the running cost is linear in the control variable, that is
$$
f_0=\nu(t,x)+C\cdot u
$$
where $\nu\in C^1([0,t_f]\times[0,1]^n)$ is a nonnegative function, and $C$ is a vector of strictly positive constants.
It is, in fact, like that of the previous section but with $q_i=1$, $i=1,\dots,n$.  

The Hamiltonian is  
\begin{eqnarray*}
&&\hspace{-5ex}H(t,u,s,x,r,p_s,p_x,p_r)
\displaystyle=\nu(t,x)+\big[C-(p_{x_1}-p_s) sx\big]\cdot u+\\
&&\hspace{21ex}+(p_{x_1}-p_s) s\bb\cdot x+\rho (p_s-p_r) r+ p_x\cdot M x+p_r\,\sigma\cdot x\,.
\end{eqnarray*}
Being linear with respect to $u$ with a coefficient with an unknown sign, the minimum value on 
$K=\prod_{i=1}^n[0,\ubar_i]$ is achieved when $u_i\in\{0,\ubar_i\}$, $i=1,\dots,n$.
 Hence, setting the {\em switching function} 
$$
\psi:=(p_{x_1}-p_s)sx
$$
the optimal controls have to satisfy
\begin{equation}\label{cuSIRlinu}
u_i(t)=\begin{cases}
0&\mbox{ if }\psi_i(t)< C_i,\\[0ex]
\ubar_i&\mbox{ if }\psi_i(t)>C_i.
\end{cases}
\end{equation}
Since, by Pontryagin's theorem, $\psi$ is a (absolutely) continuous function, then we have that 
\begin{itemize}
\item if $|\{t\in I\ :\ \psi_i(t)= C_i\}|=0$ then the optimal control $u_i$ is {\em bang-bang}, that is it takes essentially only the maximum and minimum values,
\item if, on the contrary, $|\{t\in I\ :\ \psi_i(t)= C_i\}|>0$ then there could exist an interval $(t_1,t_2)$, with $0\le t_1<t_2\le t_f$ such that
$\psi(t)=C_i$ for every $t\in(t_1,t_2)$ and the control is called {\em singular} and it is known that they may or may not be minimizing (see \cite[Chapter 8]{BHbook}). In principle, the existence of such an interval $(t_1,t_2)$ is not guaranteed because of the existence of compact sets of positive Lebesgue measure and empty interior; if $K\subset[0,t_f]$ is such a set, then, letting $\psi(t)$ the Euclidean distance between $t$ and $K$ we have that the Lipschitz continuous function $\psi$ is zero on $K$ and strictly positive outside.
\end{itemize}
The adjoint variables $p_s$, $p_x$ and $p_r$ must satisfy the same adjoint equations and transversality conditions \eqref{eaSIRsdnu} of the previous case.

\begin{remark}{ Let us remark that, by the transversality conditions,  $\psi(t_f)=0$. This fact, together with $C_i>0$ and the continuity of $\psi$  implies that $\psi_i(t)<C_i$ in a left neighborhood of $t_f$ ($i=1,\dots,n$) and hence $u(t)=0$ in this neighborhood. 
The optimal strategy towards the end of the epidemic horizon is then to disactivate the control policy.}
\end{remark}

\subsection{Study of the singular arcs} In the intervals in which
$\psi_i=C_i$, the control $u_i$ disappears from the expression of the Hamiltonian.
Hence, the application of Pontryagin's theorem does not give, in such intervals, any information on the optimal control that, nevertheless, is elsewhere characterized by \eqref{cuSIRlinu}. The study of singular arcs, that is of what happens in such intervals, is essential to understand the structure of the solutions. 
The reader interested into a general theory is referred to the monographs 
\cite[Section 2.8]{SLbook}, \cite[Chapter 8]{BHbook}, \cite{BC2003}.

To avoid technicalities we assume from now on to be under strictly positive initial conditions, so that by Theorem \ref{wpth} 
we have that the optimal solutions are strictly positive in the whole of $[0,t_f]$. 
To perfom computations it is convenient to denote by $M_i$ and $M^j$ the $i$-th row and the $j$-th column of the matrix $M$, respectively.  

Along a singular arc, that is for $t\in(t_0,t_1)$ we have $\psi(t)=C$, hence $\dot\psi(t)=0$. Denoting by
$$
\eta:=p_{x_1}-p_s
$$
and using $\psi=\eta s x$ then we get 
\begin{eqnarray*}
\dot\psi&=&\dot\eta s x+\eta\dot s x+\eta s \dot x\\
&=&\dot\eta s x+\eta\big(-s\,(\bb-u)\cdot x+\rho r \big)x+\eta s\big(s\,(\bb-u)\cdot x\e_1+M x\big)
\end{eqnarray*}
Since 
\begin{eqnarray}\label{etaprime}
\dot\eta&=&\dot p_{x_1}-\dot p_s=
-\frac{\partial \nu}{\partial x_1}-\eta s(\bb_1-u_1)-M^1\cdot p_{x}-p_r\sigma_1
+\eta(\bb-u)\cdot x
\end{eqnarray}
then, substituting,
$$
\dot\psi
=\Big(-\frac{\partial \nu}{\partial x_1}-\eta s(\bb_1-u_1)-M^1\cdot p_{x}-p_r\sigma_1\Big) s x
+\eta\rho r x+\eta s\big(s\,(\bb-u)\cdot x\e_1+M x\big)\,.
$$
In components we have
\begin{eqnarray*}
&&\hspace{-5ex}\dot\psi_1
=-\Big(\frac{\partial \nu}{\partial x_1}+M^1\cdot p_{x}-p_r\sigma_1
\Big)s  x_1+\eta\rho r x_1+\eta sM_{1}\cdot x+\eta s^2\sum_{j=2}^n(\bb_j-u_j)x_j \,,\\
&&\hspace{-5ex}\dot\psi_i
=-\Big(\frac{\partial \nu}{\partial x_1}+\eta s(\bb_1-u_1)+M^1\cdot p_{x}+p_r\sigma_1
\Big) s x_i+\eta\rho r x_i+\eta s M_{i}\cdot x,\ i=2,\dots,n\, .
\end{eqnarray*}
We observe that $u_1$ explicitly appears only in the expression of $\dot \psi_i$, $i=2,\dots,n$, while the other controls
appear only in the espression of $\dot \psi_1$.

\vspace{2ex}

{\bf The case $\boldsymbol{n\ge 2}$.} Along the singular arcs we have $\eta\ne0$ (since $\psi\ne0$). 
Since it is continuous then it takes a constant sign. Then we can solve the equations $\dot\psi_i=0$ for $i=2,\dots,n$ 
with respect to $\bb_1-u_1$ and obtain the feedback control laws
\begin{equation}\label{fcli}
\bb_1-u_1=\dfrac{-\Big(\frac{\partial \nu}{\partial x_1}+M^1\cdot p_{x}+p_r\sigma_1
\Big) s x_i+\eta\rho r x_i+\eta s M_{i}\cdot x}{\eta s^2 x_i}
\end{equation}
which imply that {\em $u_1$ is continuous in $(t_1,t_2)$}. In the particular case $n=2$ we can say something more.

\vspace{2ex}

{\bf The case $\boldsymbol{n=2}$.} In this case
$$
\dot\psi_1
=-\Big(\frac{\partial \nu}{\partial x_1}+M^1\cdot p_{x}-p_r\sigma_1
\Big)s  x_1+\eta\rho r x_1+\eta sM_{1}\cdot x+\eta s^2(\bb_2-u_2)x_2
$$
and  the equation $\dot\psi_1=0$ gives the feedback control law
$$
\bb_2-u_2=\dfrac{\Big(\frac{\partial \nu}{\partial x_1}+M^1\cdot p_{x}-p_r\sigma_1
\Big)s  x_1-\eta\rho r x_1-\eta s M_{1}\cdot x}{\eta s^2 x_2}.
$$
Together with \eqref{fcli} for $i=2$, that is the analogous law for $u_1$, it implies that {\em $u_1$ and $u_2$ are continuous in $(t_1,t_2)$}. Troughout the optimality system, this immediately implies more regularity also for states and costates. If $\nu$ is more regular then also the regularity of $u$ increases. Actually, we have that
if $\nu\in C^k([0,t_f]\times[0,1])$ then $u\in C^{k-1}(t_1,t_2)$. The two feedback laws can also be used to study the continuity of $u$ 
in the switching points between regions where it is constant and the singular arcs. We will do this in details in the case $n=1$.

\vspace{2ex}

{\bf The case $\boldsymbol{n=1}$.} It is the case of a SIRS model (SIR if $\rho=0$). Dropping the indication of the index one 
and setting $M=-\gamma<0$, the optimality system writes
\begin{equation*}
\begin{cases}
\dot s=-s\,(\bb-u)x+\rho r\\
\dot x=s\,(\bb-u)x-\gamma x\\
\dot r=\sigma x-\rho r\\
\dot p_s=-\eta(\bb-u) x\\
\dot p_{x}=-\frac{\partial\nu}{\partial x}-\eta s(\bb-u)+\gamma p_{x}-\sigma p_r\\
\dot p_r=\rho (p_s-p_r)\\
s(0)=s_0,\ x(0)=x_0,\ r(0)=r_0\\
p_s(t_f)=p_{x_i}(t_f)=p_r(t_f)=0
\end{cases}
\end{equation*}
Let us recall that $x_0>0$ and $s_0>0$ so that any solution 
satisfies $x(t)>0$ and $s(t)>0$ for every $t\in[0,t_f]$.
We have  
\begin{equation*}
\dot\psi
=\Big[\Big(-\frac{\partial \nu}{\partial x}+\gamma p_{s}+\sigma p_r
\Big)s +\rho \eta r\Big]x 
\end{equation*}
where the control does not explicitly appear. Since $x>0$, than the equation $\dot\psi=0$ is equivalent to
\begin{equation}\label{psidot=0}
\Big(-\frac{\partial \nu}{\partial x}+\gamma p_{s}+\sigma p_r
\Big)s +\rho \eta r =0\qquad\mbox{ in }(t_1,t_2).
\end{equation}
Assuming that $\nu$ be regular enough, differentiating \eqref{psidot=0} and putting $\bb-u$ into evidence, we get
\begin{eqnarray*}
0&\!\!=\!\!&(\bb-u)\Big[-\frac{\partial^2 \nu}{\partial x^2}s^2x 
-\gamma\eta sx+
\rho\eta r (2x-s)
\Big] \\
&&+\Big(\gamma \frac{\partial^2 \nu}{\partial x^2} x -\dfrac{\partial}{\partial t}\frac{\partial \nu}{\partial x} +\rho\sigma (p_s-p_r)
\Big)s+       \rho \Big(\gamma (p_{x}+p_s)-2\frac{\partial \nu}{\partial x}
\Big)r+\rho\eta (\sigma x-\rho r).
\end{eqnarray*}

\vspace{2ex}

{\bf SIR epidemic.} This espression becomes simpler in the case $\rho=0$, that is for an SIR epidemic with immunization, 
\begin{equation}\label{r=0fcl}
(\bb-u)x\big(s\frac{\partial^2 \nu}{\partial x^2}+\gamma\eta\big)=\gamma x\frac{\partial^2 \nu}{\partial x^2}-\dfrac{\partial}{\partial t}\frac{\partial \nu}{\partial x} .
\end{equation}

\begin{theorem}\label{thregu}
If $\nu\in C^2([0,t_f]\times[0,1])$ and 
\begin{equation}\label{nucon}
\gamma x\frac{\partial^2 \nu}{\partial x^2}(t,x)-\frac{\partial^2 \nu}{\partial t\partial x}(t,x)>0
\end{equation}
for every $t\in[0,t_f]$ and $x\in[0,1]$, then the following feedback control law holds
\begin{equation}\label{fcl2t}
u(t)=\bb-\frac{\displaystyle\gamma x(t)\frac{\partial^2 \nu}{\partial x^2}(t,x(t))-\frac{\partial^2 \nu}{\partial t\partial x}(t,x(t))}{\displaystyle x(t)\Big(s(t)\frac{\partial^2 \nu}{\partial x^2}(t,x(t))+\gamma\eta(t)\Big)}
\end{equation}
for every $t\in(t_1,t_2)$. Moreover,  
\begin{enumerate}
\item $u$ is continuous in $(t_1,t_2)$ and there exist, and are finite, the right and the left limits of $u$ in $t_1$ and $t_2$, respectively;
\item let $k\in\N\cup\{\infty\}$, $k\ge2$; if $\nu\in C^k([0,t_f]\times[0,1])$ then $u\in C^{k-2}(t_1,t_2)$.
\end{enumerate}
\end{theorem}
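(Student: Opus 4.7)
The plan is to treat \eqref{r=0fcl} as an algebraic equation for $u$ in terms of $s$, $x$, $\eta=p_{x_1}-p_s$, and the first two partial derivatives of $\nu$, and to read off the announced regularity directly from the smoothness of the states and the costates.

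First I would verify that, under assumption \eqref{nucon}, the denominator $x\bigl(s\,\frac{\partial^2\nu}{\partial x^2}+\gamma\eta\bigr)$ appearing in \eqref{fcl2t} does not vanish on $(t_1,t_2)$, so that \eqref{r=0fcl} can be solved for $\bb-u$. Under strictly positive initial data, Theorem~\ref{wpth} gives $x(t)>0$ on $[0,t_f]$. If at some $t_\ast\in(t_1,t_2)$ one had $s(t_\ast)\frac{\partial^2\nu}{\partial x^2}(t_\ast,x(t_\ast))+\gamma\eta(t_\ast)=0$, the left-hand side of \eqref{r=0fcl} would vanish while its right-hand side $\gamma x\frac{\partial^2\nu}{\partial x^2}-\frac{\partial^2\nu}{\partial t\partial x}$ is strictly positive by \eqref{nucon}, a contradiction. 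Dividing through in \eqref{r=0fcl} then produces precisely \eqref{fcl2t}.

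For item (1), the states $s,x$ are Lipschitz on $[0,t_f]$ by Theorem~\ref{wpth} and the costates $p_s,p_x,p_r$ are Lipschitz by Remark~\ref{bas}, so $\eta$ is too; with $\nu\in C^2$, the maps $t\mapsto\frac{\partial^2\nu}{\partial x^2}(t,x(t))$ and $t\mapsto\frac{\partial^2\nu}{\partial t\partial x}(t,x(t))$ are continuous. Hence every ingredient of \eqref{fcl2t} extends continuously from $(t_1,t_2)$ to $[t_1,t_2]$. The main subtlety is to guarantee that the denominator stays bounded away from zero up to the closure: using \eqref{r=0fcl} together with the admissibility bounds $0\le\bb-u\le\bb$ and $0<x\le1$, and with the positive minimum $c>0$ of the continuous function $\gamma x\frac{\partial^2\nu}{\partial x^2}-\frac{\partial^2\nu}{\partial t\partial x}$ on $[0,t_f]\times[0,1]$, one obtains $s\frac{\partial^2\nu}{\partial x^2}+\gamma\eta\ge c/\bb>0$ on the singular arc and hence, by continuity, on its closure. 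The one-sided limits at $t_1,t_2$ are then finite.

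For item (2) I would run a standard bootstrap. The base case $k=2$ is (1). For the inductive step, assume $u\in C^{j}(t_1,t_2)$ whenever $\nu\in C^{j+2}$, and take $\nu\in C^{j+3}$. Then the state equations $\dot s=-s(\bb-u)x$ and $\dot x=s(\bb-u)x-\gamma x$ have $C^{j}$ right-hand sides on the arc, so $s,x\in C^{j+1}(t_1,t_2)$; the same reasoning for the costate equations (using $\frac{\partial\nu}{\partial x}\in C^{j+2}$ and the preservation of $C^{j+1}$-regularity under composition) gives $p_s,p_x,p_r,\eta\in C^{j+1}(t_1,t_2)$. Since $\frac{\partial^2\nu}{\partial x^2},\frac{\partial^2\nu}{\partial t\partial x}\in C^{j+1}$, the numerator and the (bounded-away-from-zero) denominator in \eqref{fcl2t} are $C^{j+1}$ functions of $t$, whence $u\in C^{j+1}(t_1,t_2)$, closing the induction. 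The main obstacle I expect is precisely the endpoint argument in paragraph two: continuity on the open arc is immediate from the feedback formula, but preventing the denominator from degenerating at $t_1$ or $t_2$ is what really forces the coupling of \eqref{nucon} with the admissibility bounds and Theorem~\ref{wpth}.
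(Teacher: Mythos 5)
Your proof is correct and follows essentially the same route as the paper: use the positivity of the right-hand side of \eqref{r=0fcl} under \eqref{nucon} to solve for $u$ and obtain \eqref{fcl2t}, deduce continuity from that of the states, the costates and the second derivatives of $\nu$, and bootstrap through the optimality system for the higher regularity in item (2). Your explicit lower bound $c/\bb$ on the denominator, obtained from $0\le\bb-u\le\bb$ and the positive minimum of $\gamma x\frac{\partial^2\nu}{\partial x^2}-\frac{\partial^2\nu}{\partial t\partial x}$, is a careful way of justifying the finiteness of the one-sided limits at $t_1,t_2$, a point the paper leaves implicit.
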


\begin{proof} Since the right hand side of \eqref{r=0fcl} is strictly positive, and since $\bb-u\ge 0$, this means that 
$$
\bb-u>0\ \mbox{ and }\ x\big(s\frac{\partial^2 \nu}{\partial x^2}+\gamma\eta\big)>0\ \mbox{ in }(t_1,t_2).
$$
Solving for $u$ we find \eqref{fcl2t}. {\it1.} follows from the continuity in $[0,t_f]$ of the states, the costates and 
the second derivatives of $\nu$. This proves also {\it2.}\ in the case $k=2$ and the optimality system implies that states and costates belongs to $C^1(t_1,t_2)$. 

{\it2.}\ follows by induction on $k$ observing that if $\nu\in C^{k+1}$ and states and costates are $C^{k-1}(t_1,t_2)$ then
 \eqref{fcl2t} implies $u\in C^{k-1}(t_1,t_2)$. 
\end{proof}

\begin{remark}{
Assumption \eqref{nucon} is clearly satisfied if $\nu$ is strictly convex and independent of $t$ and, in such case, the feedback law  takes the even simpler form 
\begin{equation}\label{fcl2}
u(t)=\bb-\frac{\displaystyle\gamma \frac{\partial^2 \nu}{\partial x^2}\big(x(t)\big)}{\displaystyle s(t)\frac{\partial^2 \nu}{\partial x^2}\big(x(t)\big)+\gamma\eta(t)},\qquad t\in (t_1,t_2). 
\end{equation}}
\end{remark}

\begin{remark}\label{nulin}{
The feedback law is a necessary condition for the existence of a singular arc. 
A case in which it cannot be satisfied is when $\nu$ is linear and $t$-independent. Indeed, in such a case we have that the second derivatives identically vanish in \eqref{fcl2} and the law gives $u=\bb$. If $\ubar<\bb$ then it cannot be satisfied and singular arcs do not exist. If $\ubar=\bb$ then we have $u=\ubar$ in $(t_1,t_2)$ and the optimal control is piecewise constant. It has been proved in \cite{KS2020} that when $\nu$ is linear the optimal control must be quasi-concave (that is first increasing and then decreasing), then we can conclude that it is piecewise constant and can switch in at most two points (according to Propositions 6 of \cite{KS2020}). See Figure \ref{L-L_mub=008}.
}
\end{remark}

\begin{remark}
Under the assumptions of Theorem \ref{thregu},
at the switching points between a region in which the control is constant and a singular arc there exist the right and left limits of $u$. The control turns out to be continuous if and only if these limits match the constant values of the control outside the singular arc. 
\end{remark}

\vspace{2ex}

{\bf SIR epidemic with an autonomous cost functional.}
When $\nu$ is independent of time
then the Hamiltonian is constant along the optimal solutions, that is, there exists a constant $k$ such that 
\begin{equation}\label{H=KSIRlin}
\nu(x)+Cu+\eta s(\bb-u) x- \gamma p_x x=k
\end{equation}
on the whole interval $[0,t_f]$. Computing in $t_f$, using the transversality conditions and since, as already observed, $u(t_f)=\eta(t_f)=0$, then we have
$$
k=\nu(x(t_f)).
$$ 
Equation \eqref{H=KSIRlin} can be used, together with the adjoint equations that give (see \eqref{etaprime})
$$ 
\dot\eta=
-\frac{\partial\nu}{\partial x}(x)+\eta(\bb-u)(x-s)+\gamma p_{x}-\sigma p_r,
$$
to find another differential equation for $\eta$.
Indeed, by \eqref{H=KSIRlin} we have
$$
\eta(\bb-u){s}=\frac{\nu(x(t_f))-\nu(x)-Cu}{x}+\gamma p_x
$$
and substituting into the expression of $\dot\eta$ we get
\begin{equation}\label{echc}
\dot\eta=
\eta({\bb}-u){x}+\frac{\nu(x)+Cu-\nu(x(t_f))-\nu'(x)x}{x}.
\end{equation}
The usage of $\eta$ is quite natural. Nevertheless, the idea that two adjoint variables can be summarized into a
single new variable is already in \cite{Behncke2000} and used also in \cite{KS2020} where the following proposition is
proved under assumption {\it2}.

\begin{proposition}\label{etapos} For every $t\in[0,t_f]$ we have
\begin{enumerate}
\item if $\nu$ is nondecreasing then $\eta(t)\ge0$,
\item if $\nu$ is strictly increasing then $\eta(t)>0$.
\end{enumerate}
\end{proposition}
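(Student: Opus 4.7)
The plan is to propagate the sign of $\eta$ backward from the terminal condition by a contradiction argument, using the adjoint ODE together with the conserved Hamiltonian and the optimal-control characterization. First I would exploit the transversality conditions $p_s(t_f)=p_x(t_f)=0$, which give $\eta(t_f)=0$. Since $u(t_f)=0$ (noted in the paper), substituting into \eqref{echc} yields $\dot\eta(t_f)=-\nu'(x(t_f))$: nonpositive under (1), strictly negative under (2). In both cases $\eta\ge 0$ (resp.\ $>0$) on a left neighbourhood of $t_f$.

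Next, assuming (1) fails, I would put $t^{\ast}:=\sup\{t\in[0,t_f):\eta(t)<0\}$. The terminal analysis and continuity of $\eta$ force $t^{\ast}<t_f$, $\eta(t^{\ast})=0$ and $\eta\ge 0$ on $[t^{\ast},t_f]$, so the right derivative satisfies $\dot\eta(t^{\ast})\ge 0$. Since the switching function $\psi(t^{\ast})=\eta(t^{\ast})s(t^{\ast})x(t^{\ast})=0<C$, the characterization \eqref{cuSIRlinu} forces $u(t^{\ast})=0$. Evaluating the conserved Hamiltonian $\nu(x)+Cu+\eta s(\bb-u)x-\gamma p_x x\equiv\nu(x(t_f))$ at $t^{\ast}$ then produces $\gamma p_x(t^{\ast})x(t^{\ast})=\nu(x(t^{\ast}))-\nu(x(t_f))$, and combining with the adjoint equation at $\eta(t^{\ast})=0$ yields the identity
\[
\dot\eta(t^{\ast}) \;=\; \frac{\nu(x(t^{\ast}))-\nu(x(t_f))-\nu'(x(t^{\ast}))\,x(t^{\ast})}{x(t^{\ast})}\;\ge\; 0.
\]

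To sharpen this into a contradiction I would apply the same computation at a \emph{global} minimizer $t^{\ast\ast}$ of $\eta$ on $[0,t_f]$: if $t^{\ast\ast}$ is interior then $\dot\eta(t^{\ast\ast})=0$, and combining the Hamiltonian conservation with the adjoint equation produces an explicit expression
\[
m \;=\; \frac{\nu'(x(t^{\ast\ast}))\,x(t^{\ast\ast})-\nu(x(t^{\ast\ast}))+\nu(x(t_f))}{\bb\,x(t^{\ast\ast})^{2}}
\]
for $m:=\eta(t^{\ast\ast})$, with the minimum attained. Monotonicity of $\nu$ and nonnegativity of $x(t^{\ast\ast})$, $x(t_f)$ should then force $m\ge 0$ (resp.\ $>0$ in case (2)); the boundary cases $t^{\ast\ast}\in\{0,t_f\}$ are either trivial ($t^{\ast\ast}=t_f$ gives $m=0$) or handled by the one-sided version of the same estimate.

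The main obstacle I anticipate is the final sign step: extracting $m\ge 0$ from the algebraic identity using only the monotonicity hypothesis on $\nu$. Without additional structure (convexity or $\nu(0)=0$) the inequality $\nu(x(t^{\ast\ast}))-\nu'(x(t^{\ast\ast}))x(t^{\ast\ast})\le\nu(x(t_f))$ is not automatic, and closing the argument likely requires supplementing the pointwise identity with trajectory-level information, for instance the integral representations $p_s(t)=\int_t^{t_f}\eta(\bb-u)x\,d\tau$ and $p_x(t)=\int_t^{t_f}e^{\gamma(t-\tau)}[\nu'(x)+\eta s(\bb-u)]\,d\tau$ applied on $[t^{\ast\ast},t_f]$ where $\eta\ge 0$. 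Alternatively, one can reduce case (1) to case (2) by perturbing $\nu$ to $\nu+\varepsilon\,\mathrm{id}$ (strictly increasing) and passing $\varepsilon\to 0^{+}$ via continuous dependence of the optimality system.
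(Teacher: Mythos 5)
Your plan is close in spirit to the paper's argument: both rest on $\eta(t_f)=0$, on the observation that $\eta\le 0$ forces $\psi=\eta s x<C$ and hence $u=0$ locally via \eqref{cuSIRlinu}, and on the identity \eqref{echc} coming from the conserved Hamiltonian. But there is a genuine gap, and you have located it yourself: the sign of $\nu(x)-\nu(x(t_f))-\nu'(x)x$ (equivalently, of $\nu'(x)x-\nu(x)+\nu(x(t_f))$ in your formula for $m$) is \emph{not} controlled by monotonicity of $\nu$ alone; for a smoothed increasing step with $x(t^{\ast\ast})$ past the step and $x(t_f)$ before it, that quantity is negative, so the pointwise estimate you need does not follow from the hypotheses as you use them. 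The paper closes exactly this step by invoking convexity of $\nu$ (its proof explicitly uses that $\nu$ is increasing \emph{and convex}, following \cite{KS2020}): adding $\nu'(x)x(t_f)\ge 0$ and using $\nu(x)-\nu(x(t_f))-\nu'(x)\big(x-x(t_f)\big)\le 0$ gives $\dot\eta\le\eta\bb x$ wherever $u=0$, hence $\dot\eta(t)<0$ at any point with $\eta(t)<0$ (respectively $\eta(t)\le 0$ in the strictly increasing case, where $\nu'(x)x(t_f)>0$ makes the estimate strict); this differential inequality keeps $\eta$ negative up to $t_f$, contradicting $\eta(t_f)=0$, with no need for first-crossing points or global minimizers. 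So the missing ingredient is convexity, which the statement does not display but the proof requires.

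Your two proposed repairs do not fill this hole. Reducing case (1) to case (2) by replacing $\nu$ with $\nu+\varepsilon\,\mathrm{id}$ does not help: strict monotonicity by itself still does not yield the needed inequality (in the paper the strict case also uses convexity; strictness only upgrades a weak inequality to a strict one through $\nu'(x)x(t_f)>0$), and perturbing the cost changes the optimal control problem, so continuous dependence of the optimal pair and of $\eta$ on $\varepsilon$ is itself an unproven and delicate claim. The integral representations of $p_s$ and $p_x$ are correct but are not combined into a conclusive sign argument. A smaller slip: in case (1) you cannot infer $\eta\ge 0$ on a left neighbourhood of $t_f$ from $\dot\eta(t_f)=-\nu'(x(t_f))$, since that derivative may vanish; the paper avoids this by arguing at an arbitrary point where $\eta$ has the wrong sign rather than propagating backward from $t_f$. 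If you add convexity, your minimizer computation does close (with the one-sided variant at $t^{\ast\ast}=0$), but it is more elaborate than the paper's direct differential inequality.
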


\begin{proof} Arguing by contradiction, let us suppose that there exists $t\in[0,t_f]$ such that corresponding to the two cases of the statement, 
\begin{itemize}
\item[\it{1.}] $\eta(t)<0$, 
\item[\it{2.}] $\eta(t)\le0$.
\end{itemize} 
Since the switching function $\psi=\eta s x$ takes the same sign as $\eta$, and 
since $C>0$, in both cases we have $\psi(t)<C$, hence $u(t)=0$. On the other hand, since $\psi$ is continuous, then $\psi<C$, 
and hence $u=0$, in a neighborhood of $t$. Using \eqref{echc} and the fact that $\nu$ is increasing and convex, 
repeating the argument of \cite{KS2020},
in this neighborhood we have
{\begin{eqnarray*}
\dot\eta&=
&\eta\bb{x}+\frac{\nu(x)-\nu(x(t_f))-\nu'(x)x}{x}\\
&\le&\eta\bb{x}+\frac{\nu(i)-\nu(x(t_f))-\nu'(x)x+\nu'(x)x(t_f)}{x}\\
&=&\eta\bb{x}+\frac{\nu(x)-\nu(x(t_f))-\nu'(x)(x-x(t_f))}{x}\\
&\le&\eta\bb{x},
\end{eqnarray*}
and the strict inequality holds if $\nu$ is strictly increasing since, in this case, we have $\nu'(x)x(t_f)>0$. 

In both cases then we have
$$
\dot\eta(t)<0.
$$
This would imply that $\eta(s)<0$ for every $s>t$, which contradicts the fact that $\eta(t_f)=0$. }

\end{proof}

Proposition \ref{etapos} has consequences regarding the effectiveness of the control policies.

\begin{proposition}\label{monsa} Let $\nu$ be of class $C^2$.
\begin{enumerate}
\item If $\nu$ is  convex and nondecreasing then, along the singular arcs, the population of infected individuals weakly decreases.
\item Let $\ubar<\bb$. If $\nu$ is strictly convex and strictly increasing then, along the singular arcs, the population of infected individuals strictly decreases.
\end{enumerate}
\end{proposition}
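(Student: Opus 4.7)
The plan is to extract from the singular-arc equation \eqref{r=0fcl} a closed-form identity for $\dot x$ on $(t_1,t_2)$ whose sign can be read off directly from Proposition \ref{etapos} and from the convexity hypotheses. In the autonomous case the mixed time-derivative in \eqref{r=0fcl} vanishes and, after dividing by $x>0$, one is left with
$$
(\bb-u)\bigl(s\,\nu''(x)+\gamma\eta\bigr)=\gamma\,\nu''(x)\qquad\text{on }(t_1,t_2).
$$
Rearranging and multiplying by $x$, this rewrites as
$$
\nu''(x)\,\dot x=-x(\bb-u)\gamma\eta,
$$
since $\dot x=x[s(\bb-u)-\gamma]$ from the SIR state equation. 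This one-line identity is the heart of the argument.

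Before using it, I would strengthen Proposition \ref{etapos} on the arc itself: because $(t_1,t_2)$ is a singular arc, the switching function satisfies $\psi=\eta\,s\,x\equiv C>0$, and $s,x>0$ by Theorem \ref{wpth} under strictly positive initial data, so in fact $\eta>0$ \emph{strictly} throughout $(t_1,t_2)$ in both cases of the proposition. This little upgrade is what lets us handle the degenerate subcase below.

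For part 1, assume $\nu$ convex and nondecreasing. Then $\nu''(x)\ge0$, $\bb-u\ge0$ and the right-hand side of the identity is non-positive. Where $\nu''(x(t))>0$, dividing gives $\dot x\le0$ immediately; where $\nu''(x(t))=0$, the identity forces $(\bb-u)\gamma\eta=0$, and since $\gamma,\eta>0$ one must have $u=\bb$ at that instant, so that $\dot x=-\gamma x<0$ by the state equation. In either case $\dot x\le0$. For part 2, strict convexity gives $\nu''(x)>0$ throughout, while $\ubar<\bb$ secures $\bb-u\ge\bb-\ubar>0$; combined with $\eta>0$ the identity yields $\dot x=-x(\bb-u)\gamma\eta/\nu''(x)<0$ strictly.

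The only delicate point I anticipate is the subcase $\nu''(x(t))=0$ in part 1, where one cannot simply divide by $\nu''$; but the remark that $\eta>0$ on any singular arc (because $\psi=C>0$, not just $\psi\ge0$) immediately closes this gap by forcing $u=\bb$ at such points. Beyond the single algebraic rearrangement of \eqref{r=0fcl}, no further computation is required.
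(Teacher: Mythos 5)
Your proof is correct and takes essentially the same route as the paper's: the paper derives the identical key identity $\nu''(x)\,\dot x=-\gamma\,\eta\,(\bb-u)\,x$ on the arc (by differentiating \eqref{psidot=0} and substituting $\dot p_s=-\eta(\bb-u)x$, which is equivalent to your rearrangement of \eqref{r=0fcl}) and then reads off the signs, invoking Proposition \ref{etapos} for $\eta\ge0$ (resp.\ $\eta>0$). Your two small variations are sound and even slightly sharper: obtaining $\eta>0$ directly from $\psi=\eta s x\equiv C>0$ on the singular arc, and using it to dispose of the subcase $\nu''(x(t))=0$ in part 1, a point the paper's proof passes over without comment.
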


\begin{proof} By \eqref{psidot=0}, in the autonomous case with $\rho=0$, we have
$$
-\nu'(x)+\gamma p_{s}=0 \qquad\mbox{ in }(t_1,t_2).
$$ 
Computing the first derivative and using the adjoint equation {$\dot{p}_s=-\eta(\bb-u)x$}, we have 
$$
\nu''(x)\dot x=-\gamma \eta(\bb-u)x\quad\mbox{ in }(t_1,t_2).
$$
Since moreover $\gamma x>0$, then
\begin{itemize} 
\item under assumption {\it1.}\ we have $\eta\ge0$, $\nu''\ge0$ and $\bb-u\ge0$; 
hence $\dot x\le0$ and $x$ is nonincreasing;
\item under assumption {\it2.}\ we have $\eta>0$, $\nu''>0$ and $\bb-u\ge0$; hence $\dot x<0$ and $x$ is strictly decreasing.
\end{itemize}
\end{proof}

\begin{remark}\label{rtu}{
The proof of Proposition \ref{etapos} works also for a running cost of the form $f_0=\nu(x)+Cu^q$ with $q>1$ like in Section \ref{sssup}, leading, in the case of a nondecreasing $\nu$, to $\psi=\frac{1}{qC}\eta s x\ge0$ and, hence,  to the
 following simpler characterization of the optimal control
\begin{equation}\label{utrunc}
u(t)=\min\{\psi(t),\ubar\}.
\end{equation}
See Figure \ref{Q-Q_mub=008}. A case in which $\psi(t)\le\ubar$ for every $t$ is shown in Figure \ref{Q-Q_mub=004}.}
\end{remark}

\begin{remark}[Behavior at the switching points]\label{bsp}{
We have already remarked that if $\nu\in C^2$ is convex and independent of $t$ then the assumptions of Theorem \ref{thregu}
are satisfied. Then, 
at the switching points between a region in which the control is constant and a singular arc, the control turns out to be continuous if and only if the right and left limits at the extrema of the interval $(t_1,t_2)$ match the constant values of the control outside the interval. 
If, for instance, $t_1$ is a switching point between an interval in which $u$ is the constant $0$ and the singular arc
then, in the strictly convex autonomous case, 
the continuity condition is
$$
s(t_1)=\frac{\gamma}{\bb}-\frac{\gamma\eta(t_1)}{\frac{\partial^2 \nu}{\partial x^2}(x(t_1)) },
$$
which implies
$$
s(t_1)<\frac{\gamma}{\bb}.
$$
Let us remark that ${\gamma}/{\bb}$ is the number of susceptible individuals that corresponds to the uncontrolled epidemic peak.
Since it is convenient to activate the control before the peak time (if it not identically zero and since otherwise a translation of the control 
function would provide a better performance) then we expect to have always a discontinuity at the first switching time like in Figure
\ref{Q-L_mub=01} and \ref{Q-L_mub=008}.
If, instead, $t_1$ is a switching point between an interval in which $u$ is the constant $\ubar$ and the singular arc, then
the continuity condition is
$$
\bb-\ubar=\frac{\displaystyle\gamma \frac{\partial^2 \nu}{\partial x^2}\big(x(t_1)\big)}{\displaystyle s\frac{\partial^2 \nu}{\partial x^2}\big(x(t_1)\big)+\gamma\eta(t_1)}.
$$
We deduce that, if $\ubar=\bb$ then, in the strictly convex autonomous case, the optimal control is always discontinuous in this kind of switching points. Such kind of discontinuites occur in  Figure \ref{Q-L_mub=01} and \ref{Q-L_mub=008} .
 }
\end{remark}

\section{Bocop simulations} 

To conclude, we present some numeric simulations done by using the Bocop package, \cite{Bocop, BocopExamples}.
We do not aim here to perform numerical analysis, but just use them as examples to explain some results. 
For this reason, and for simplicity,
the simulations are made on the SIR epidemic model 
\begin{equation*}
\begin{cases}
\dot s=-s\,(\bb-u)x\\
\dot x=s\,(\bb-u)x-\gamma x\\
s(0)=s_0,\ x(0)=x_0\,.
\end{cases}
\end{equation*}
We consider the following three cost functionals with different growths in the state and control variables that are paradigmatic 
of the analysis performed in Section \ref{sssup} and \ref{slu}:
\begin{itemize}
\item $\displaystyle J_{QQ}(x,u)=\int_0^{t_f} \big(x^2+u^2\big)\,dt$, quadratic in state and control;
\item $\displaystyle  J_{QL}(x,u)=\int_0^{t_f} \big(30 x^2+ u\big)\,dt$, quadratic in state and linear in the control; 
\item $\displaystyle J_{LL}(x,u)=\int_0^{t_f} \big(2x+ u\big)\,dt$, linear in state and control.
\end{itemize}
The first functional falls in the theory devoloped in Section \ref{sssup}, while the others refer to Section \ref{slu}.

\newpage

The Bocop package implements a local optimization method. The optimal control problem is approximated by a finite dimensional optimization problem (NLP) using a time discretization (the direct transcription approach). The NLP problem is solved by the well known software Ipopt, using sparse exact derivatives computed by CppAD. 
The default list of discretization formulas proposed by the package includes: Euler, Midpoint, Gauss II and Lobatto III C. Among them, we have chosen to use Lobatto III C for its numerical stability.
Indeed, it is well known that it is an excellent method for stiff problems (see \cite{J2015}) like the computation of singular arcs. Using it, we have avoided some numerical instabilities
developed by the other methods in such kind of computations.

We consider a time horizon $t_f$ of $360$ days. The choice of the coefficients $\bb=0.16$, $\gamma=0.06$ and of the initial conditions $i_0=0.001$, $s_0=0.999$,
has been done according to \cite{KS2020}. The coefficients in front of the state in the cost functionals are choosen in a way to balance  the contributions of the two terms and ensure convergence of the computations.

 In Figure \ref{Q-Q_mub=008} and \ref{Q-Q_mub=004} the cost is quadratic both in the state and in the control variables. 
In the second, the maximum value of the control would exceed the upper bound $\ubar$ and then it is truncated 
according to Remark \ref{rtu} and equation \eqref{utrunc}.


\begin{figure}[H]
\begin{minipage}{0.49\textwidth}
\begin{center}
\includegraphics[width=\textwidth]{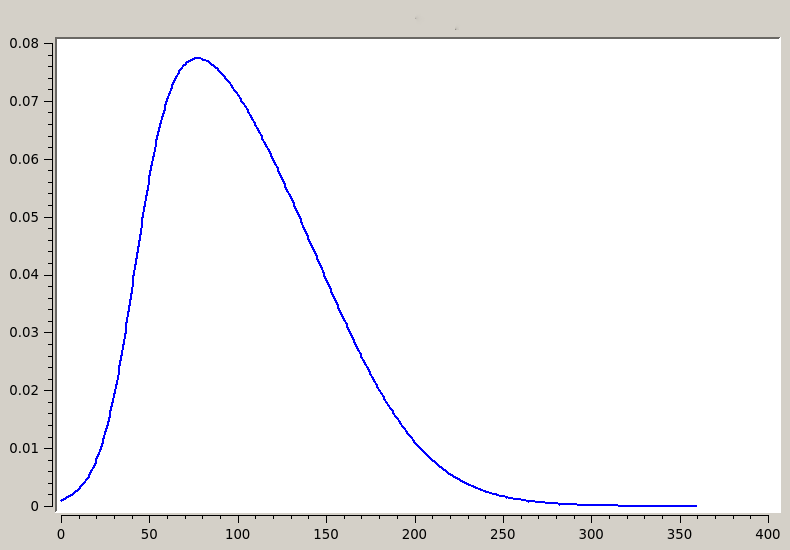}\\
optimal control $u$
\end{center}
\end{minipage}
\begin{minipage}{0.49\textwidth} 
\begin{center}
\includegraphics[width=\textwidth]{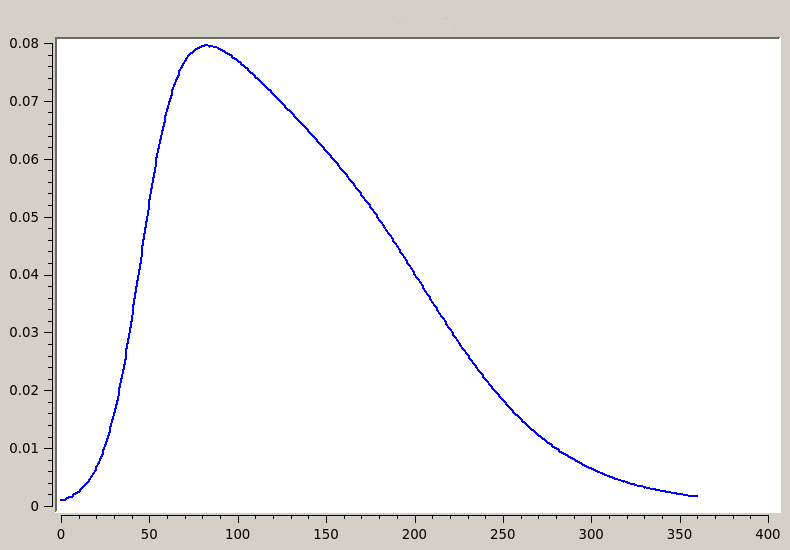}\\
state $x$ (infected)
\end{center}
\end{minipage}
\vspace{2ex}
\caption{$J_{QQ}$ with $\ubar=0.08$}
\label{Q-Q_mub=008}
\end{figure}

\vspace{-2ex}

\begin{figure}[H]
\begin{minipage}{0.49\textwidth}
\begin{center}
\includegraphics[width=\textwidth]{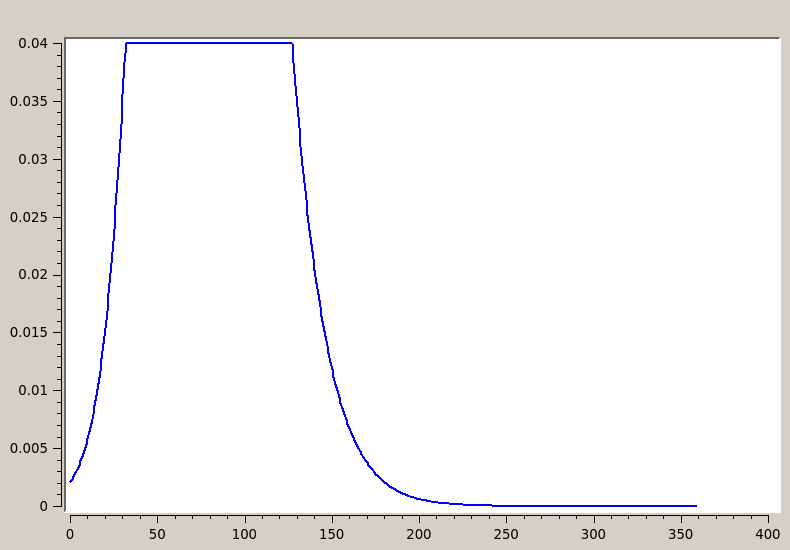}\\
optimal control $u$
\end{center}
\end{minipage}
\begin{minipage}{0.49\textwidth} 
\begin{center}
\includegraphics[width=\textwidth]{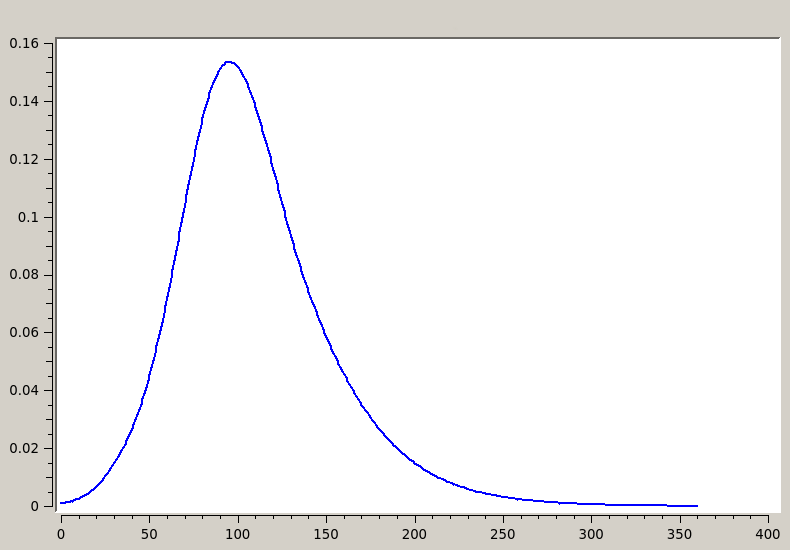}\\
state $x$ (infected)
\end{center}
\end{minipage}
\vspace{2ex}
\caption{$J_{QQ}$ with $\ubar=0.04$}
\label{Q-Q_mub=004}
\end{figure}

\newpage

In Figure \ref{Q-L_mub=01} and \ref{Q-L_mub=008} the cost is quadratic in the state but linear in the control and therefore singular arcs can be expected. In fact, Figure \ref{Q-L_mub=01} shows a bang-singular-bang control structure, while a bang-bang-singular-bang control appears in Figure \ref{Q-L_mub=008}. Note that all discontinuities at the switching points are predicted in Remark \ref{bsp}.
Moreover it can be observed that the population of infected individuals strictly decreases along the singular arcs
as predicted by Proposition \ref{monsa}.

\vspace{6ex}

\begin{figure}[H]
\begin{minipage}{0.49\textwidth}
\begin{center}
\includegraphics[width=\textwidth]{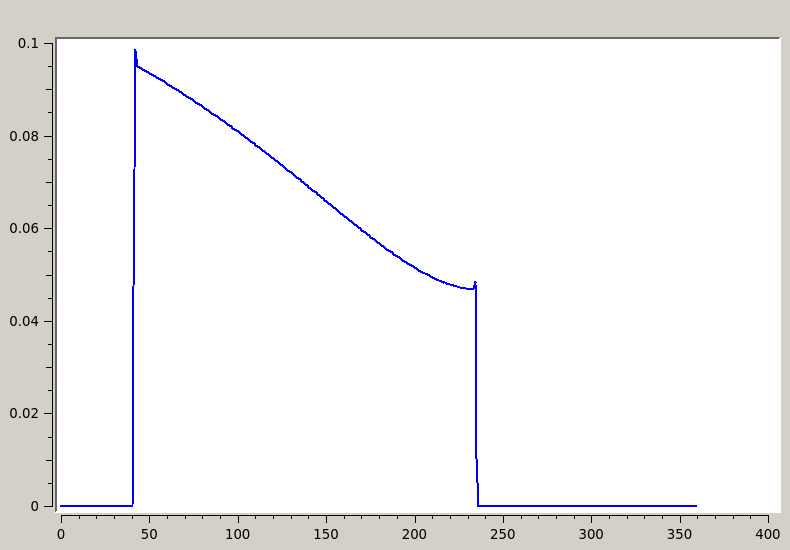}\\
optimal control $u$
\end{center}
\end{minipage}
\begin{minipage}{0.49\textwidth} 
\begin{center}
\includegraphics[width=\textwidth]{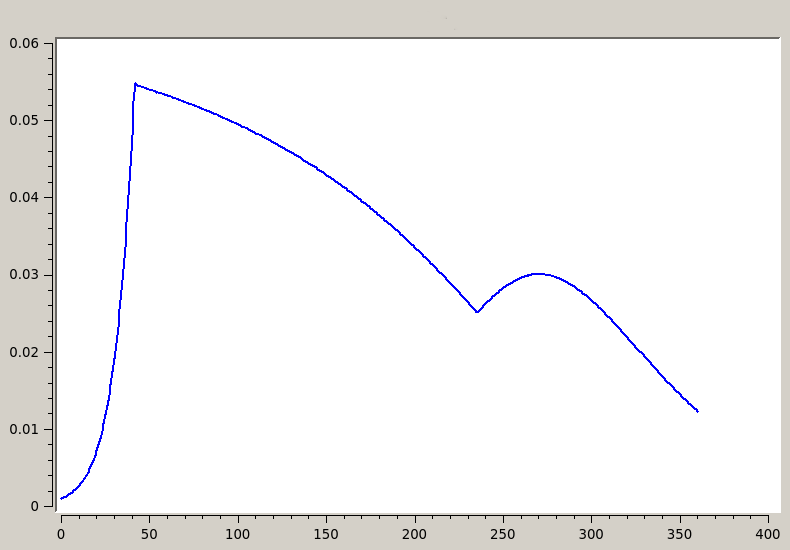}\\
state $x$ (infected)
\end{center}
\end{minipage}
\vspace{2ex}
\caption{$J_{QL}$ with $\ubar=0.1$}
\label{Q-L_mub=01}
\end{figure}

\begin{figure}[H]
\begin{minipage}{0.49\textwidth}
\begin{center}
\includegraphics[width=\textwidth]{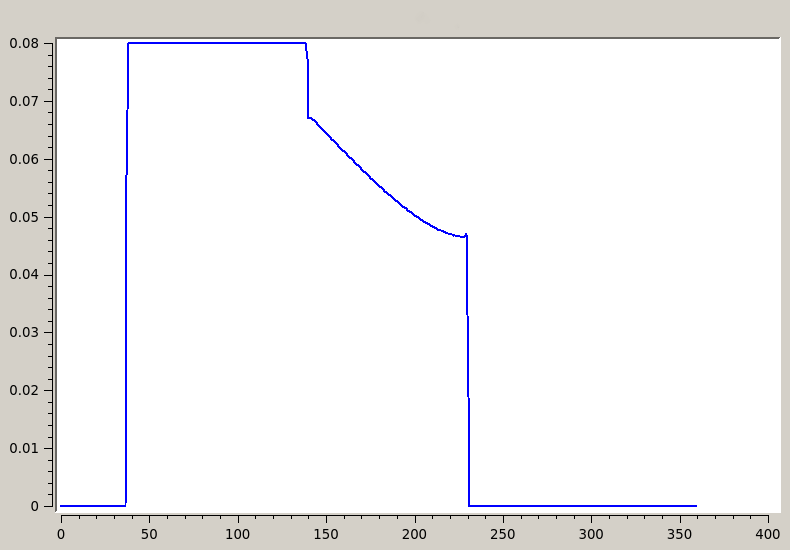}\\
optimal control $u$
\end{center}
\end{minipage}
\begin{minipage}{0.49\textwidth} 
\begin{center}
\includegraphics[width=\textwidth]{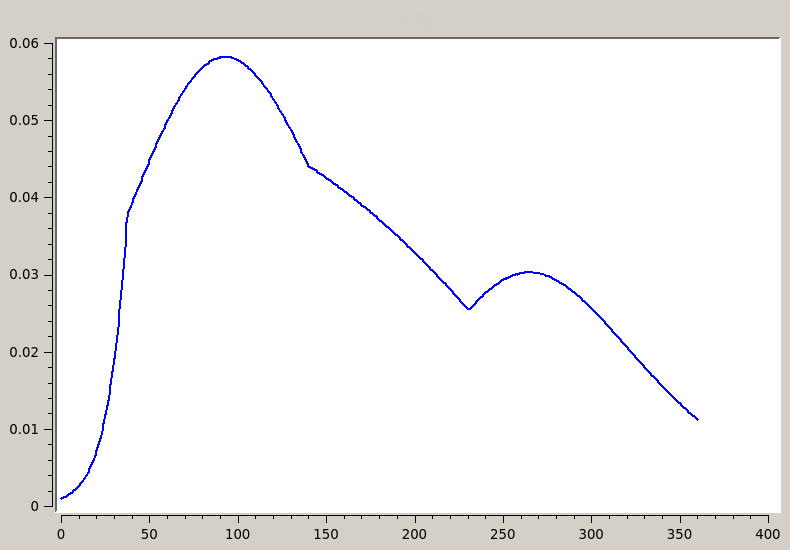}\\
state $x$ (infected)
\end{center}
\end{minipage}
\vspace{2ex}
\caption{$J_{QL}$ with $\ubar=0.08$}
\label{Q-L_mub=008}
\end{figure}

\newpage

In Figure \ref{L-L_mub=008} the cost is linear in both the state and the control variables. In this case only 
bang-bang controls with at most two switching points are permitted according to Remark \ref{nulin}.

\begin{figure}[H]
\begin{minipage}{0.99\textwidth}
\begin{center}
\includegraphics[width=0.9\textwidth]{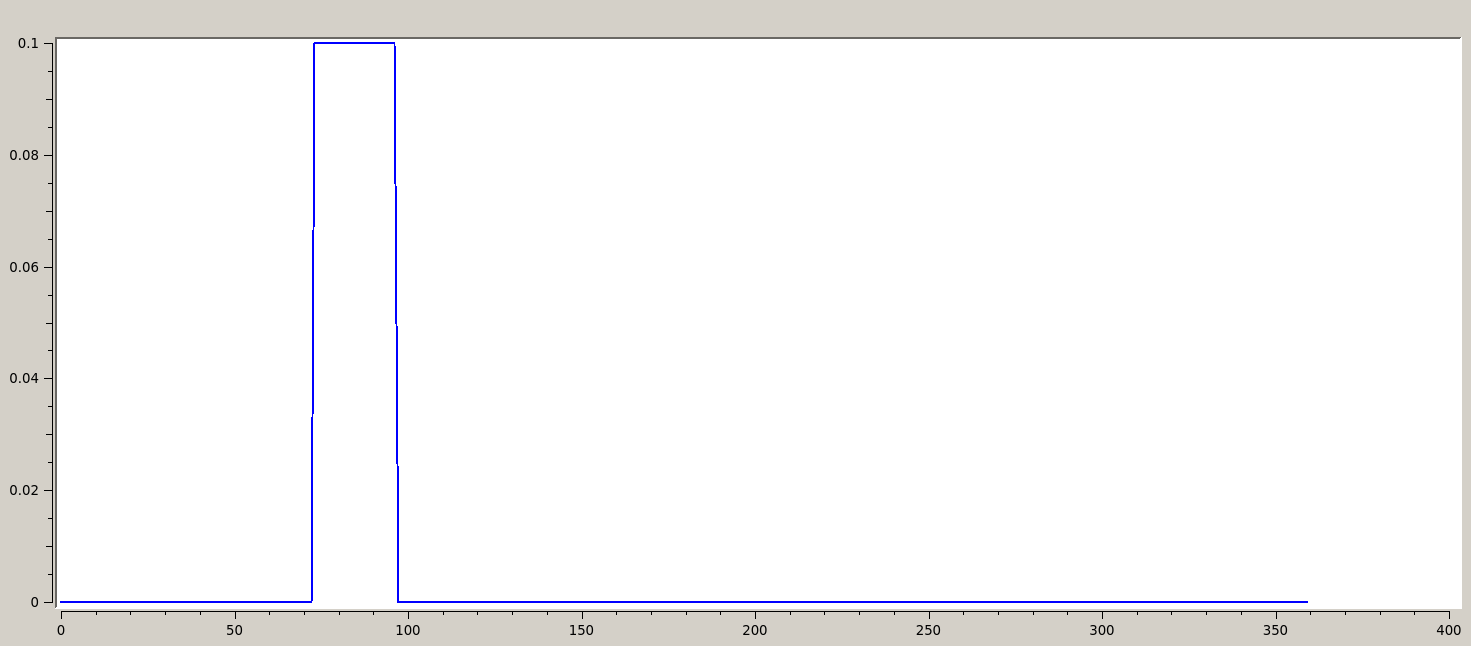}\\
optimal control $u$
\end{center}
\end{minipage}

\vspace{1ex}

\begin{minipage}{0.99\textwidth} 
\begin{center}
\includegraphics[width=0.9\textwidth]{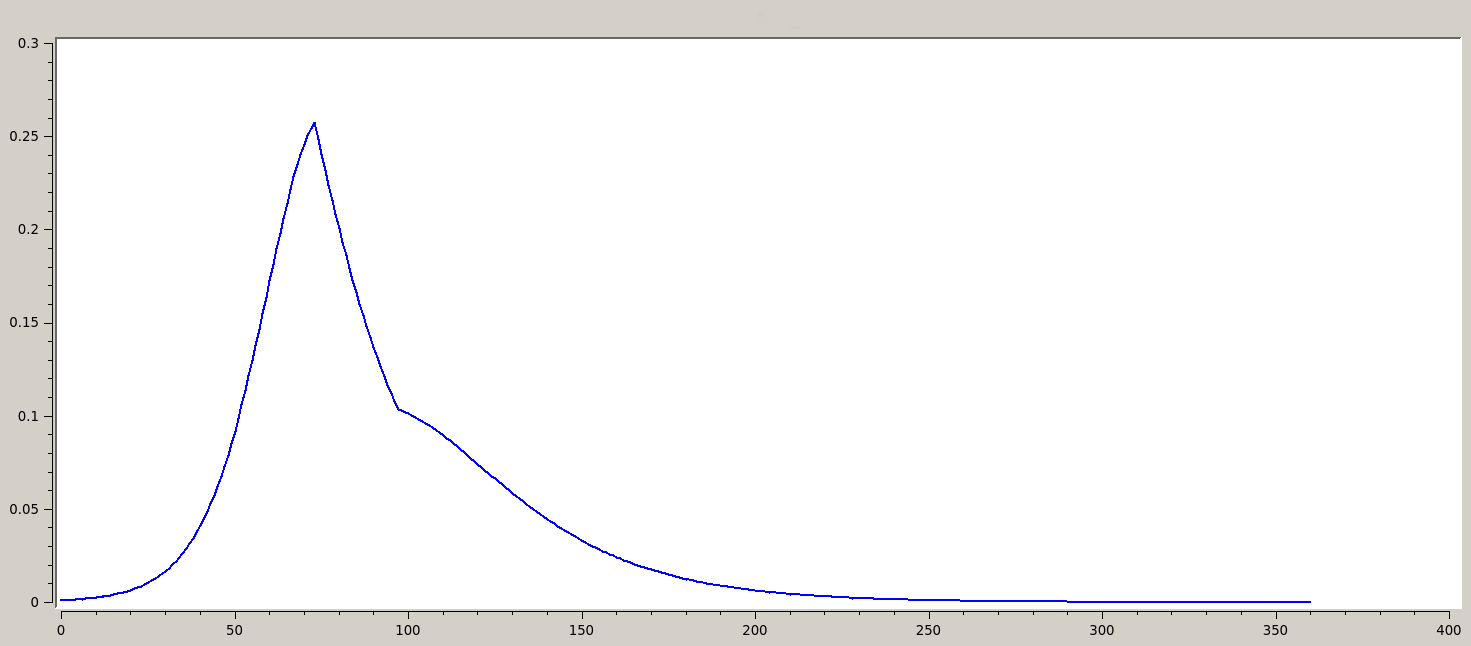}\\
state $x$ (infected)
\end{center}
\vspace{2ex}
\end{minipage}
\caption{$J_{LL}$ with $\ubar=0.1$}
\label{L-L_mub=008}
\end{figure}

\section{Conclusions and perspectives}

We introduced a general system of ordinary differential equations that accounts for a vector valued state function whose components represent various kinds of exposed/infected subpopulations, with a corresponding vector of control functions possibly different for any subpopulations. It includes some classical and recent  models for the epidemic spread in a closed population without vital dynamic in a finite time horizon.  

In the general setting, we proved well-posedness and positivity of the initial value problem for the system of state equations and
the existence of solutions to the optimal control problem of the coefficients of the nonlinear part of the system, under a very general cost functional. We also proved the uniqueness of the optimal solution for a small time horizon when the
cost is superlinear in all control variables with possibly different exponents in the interval $(1,2]$. 

In a second part of the paper we studied necessary optimality conditions.
In the case of a linear cost in the control variables, in which singular arcs are expected, we  
derived feedback control laws that allow for the study of qualitative properties of the optimal solutions
like monotonicity (Proposition \ref{monsa}) and regularity. In particular, in the quadratic case the optimal control turns out to be a Lipschitz continuous function (Proposition \ref{qcuLc}). On the contrary, when the control appears linearly
discontinuities are expected to occur between regions in which the control is constant and the singular arcs, according to the analysis 
developed in Section \ref{slu}. Finally, the results are illustrated by the aid of some numerical simulations.

For simplicity, the analysis done in Section \ref{slu} has been mainly limited to the case of a SIR model and can be further developed by considering some different or more general situations. Also the introduction of general spatial terms (reaction-diffusion like) in the state equations could be an interesting development direction.

\end{document}